\setlist[enumerate]{leftmargin=.5in}
\setlist[itemize]{leftmargin=.5in}
\crefname{hypothesis}{Hypothesis}{Hypotheses}
\title{Variance of the distance to the boundary of convex domains in $\mathbb{R}^{2}$ and $\mathbb{R}^{3}$\thanks{Submitted to the editors 1st July 2024.
\funding{This work was funded by grants from the Simons Foundation (352034), the Biotechnology and Biological Sciences Research Council (BB/V018647/1 and BB/Y514020/1), and the Engineering and Physical Sciences Research Council (EP/W024144/1).}}}
\author{Alastair N. Fletcher\thanks{Department of Mathematical Sciences, Northern Illinois University, Dekalb, IL 60115, USA 
  (\email{afletcher@niu.edu}).}
\and Alexander G. Fletcher\thanks{School of Mathematics and Statistics, University of Sheffield, Hicks Building, Hounsfield Road, Sheffield S3 7RH, UK 
  (\email{a.g.fletcher@sheffield.ac.uk}).}}
\begin{document}

\maketitle

\begin{abstract}
In this paper, we give for the first time a systematic study of the variance of the distance to the boundary for arbitrary bounded convex domains in $\mathbb{R}^2$ and $\mathbb{R}^3$. 
In dimension two, we show that this function is strictly convex, which leads to a new notion of the centre of such a domain, called the variocentre. 
In dimension three, we investigate the relationship between the variance and the distance to the boundary, which mathematically justifies claims made for a recently developed algorithm for classifying interior and exterior points with applications in biology.
\end{abstract}

\begin{keywords}
Convex geometry, variance, insideOutside
\end{keywords}

\begin{MSCcodes}
52A10, 52A15, 52A20
\end{MSCcodes}

\section{Introduction} \label{sec:Introduction}

There are many different notions of the centre of a convex domain. For instance, triangles have the circumcentre, the centroid, the incentre, the orthocentre, the nine-point centre and the Chebyshev centre. For general polygons, the usual notion of centre is the centroid, or centre of mass, but other notions of centre can be defined by placing equal masses at the vertices, called the vertex centroid, or by assuming the sides of the polygon have equal mass per unit length, called the side centroid.

In this paper, we investigate another notion of the centre of a convex domain that has received recent attention. If $U \subset \mathbb{R}^{n}$, for $n\geq 2$, is a bounded convex domain and if $x_0 \in U$, then for any direction $\sigma \in S^{n-1}$, there is a unique point on $\partial U$ in that direction from $x_0$. This naturally gives a distance-to-the-boundary function $S^{n-1}\to \mathbb{R}^{+}$ that depends on $x_0$. We can ask for the various integral means of this function that we will denote by $I_k^{U}(x_0)$ (we will make this more precise later). Observe that the usual distance to the boundary $d(x_0, \partial U)$ is just the minimum of this function over $\sigma \in S^{n-1}$.

The quantity that is central to our investigations is the variance of the distance to the boundary function given by $v_U(x_0) = I_2^{U}(x_0) - [ I_1^{U}(x_0) ]^2$. It is clear that if $U$ is a ball and $x_0$ is its usual centre, then the distance to the boundary in any direction is the same, from which it follows that the variance $v_U(x_0) = 0$. For any other point of the ball, elementary considerations show that the variance is strictly positive. Therefore the variance is minimized at the centre of the ball. It is this idea that we wish to explore for more general convex domains. First, we investigate the regularity properties of the variance function and second, we wish to ascertain whether or not the variance function has a unique minimum.

The distance to the boundary function $d(\cdot, \partial U)$ is well-known to be concave, with regularity close to the boundary depending on the regularity of $\partial U$, see for example~\cite[Lemma 14.16]{GT01}. The interplay between the distance to the boundary function and the variance function is a large factor in the motivation of our study.

S. Strawbridge et al~\cite{Straw23} made use of both of these notions in an unsupervised machine learning algorithm for accurately and robustly classifying interior and exterior points of a 3D point-cloud. 
In particular, it was stated in~\cite{Straw23} that if $U$ is the unit ball in $\mathbb{R}^{3}$, then $v_{U}(x_{0})$ has a unique minimum at the centre of the ball. 
Moreover, a key part of the algorithm studied in~\cite{Straw23} is an inverse relationship between the distance to the boundary and the variance. 
This relationship was motivated based on empirical investigations; one could come up with alternative statistics allowing clustering of interior and exterior points. 
The classification algorithm developed in~\cite{Straw23} is useful in a variety of biological settings where cells located on the exterior of a tissue behave differently to those on its interior. For example, in the early mouse embryo (a common experimental proxy for human embryonic development), exterior cells give rise to the placenta, while interior cells give rise to the foetus and yolk sac. 
In cases where it is not possible to reliably directly label and image specific proteins to reveal which cells will give rise to which tissues, a classification algorithm based solely on cells' spatial locations is required.

Inspired by this work, the first named author with K. Fletcher and J. Wasiqi~\cite{FFW23} conducted a mathematical study of $v_{T}(x_{0})$ for planar triangles $T$.
It was shown in this paper that if $\gamma$ is a line segment joining a vertex of a triangle to a point on the opposite side, then the variance restricted to $\gamma$ is a convex function. However, this appears to be some way short of showing that the variance function as a whole is convex.

In this paper, we will address this question and, in fact, resolve the question of convexity in dimension $2$.

\begin{theorem}
    \label{thm:dim2convex}
Let $U$ be a bounded convex domain in $\mathbb{R}^2$. Then $v_U$ is a strictly convex function.
\end{theorem}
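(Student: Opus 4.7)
The plan is first to reduce the problem by noting that $I_2^U$ is constant on $U$. Passing to polar coordinates centred at $x_0$ gives $\mathrm{Area}(U) = \int_0^{2\pi}\!\int_0^{r(x_0,\theta)} s\,ds\,d\theta = \tfrac{1}{2}\int_0^{2\pi}r(x_0,\theta)^2\,d\theta$, so $I_2^U(x_0) \equiv \mathrm{Area}(U)/\pi$. Thus $v_U(x_0) = \mathrm{Area}(U)/\pi - [I_1^U(x_0)]^2$, and strict convexity of $v_U$ is equivalent to strict concavity of $[I_1^U]^2$ on $U$. The same change of variables yields the useful Riesz-type representation $I_1^U(x_0) = \tfrac{1}{2\pi}\int_U |y-x_0|^{-1}\,dy$. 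As a stepping stone I would record that $I_1^U$ is itself concave: for each fixed direction $\hat\theta\in S^1$, $r(x_0,\theta)$ is concave in $x_0$, since for $x_0 = (1-t)x_1 + tx_2$ and $y_i = x_i + r(x_i,\theta)\hat\theta \in \partial U$ the point $(1-t)y_1 + ty_2 \in \overline U$ by convexity of $U$, forcing $r(x_0,\theta) \geq (1-t)r(x_1,\theta) + tr(x_2,\theta)$; averaging over $\theta$ gives the concavity of $I_1^U$.

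To upgrade to strict concavity of $[I_1^U]^2$, I would restrict to an arbitrary line segment $x(t) = (1-t)x_1 + tx_2$ in $U$ and compute using the variance form $v_U = \tfrac{1}{2\pi}\int_0^{2\pi}(r_\theta - I_1^U)^2\,d\theta$. Writing $'=d/dt$ and using $\int_0^{2\pi}(r_\theta - I_1^U)\,d\theta = 0$ to eliminate the cross term, one obtains
\[
 \pi\,\tfrac{d^2}{dt^2}v_U(x(t)) \;=\; \int_0^{2\pi}\!\bigl(r_\theta' - (I_1^U)'\bigr)^2 d\theta \;+\; \int_0^{2\pi}\!\bigl(r_\theta - I_1^U\bigr)\,r_\theta''\,d\theta.
\]
The first term is a nonnegative angular variance of $r_\theta'$, strictly positive unless $r_\theta'(t)$ is independent of $\theta$; the second term is sign-indefinite since $r_\theta - I_1^U$ oscillates in $\theta$, even though $r_\theta''\le 0$ by concavity of $r_\theta$ in $x_0$.

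The main obstacle is combining these two contributions to force strict positivity of their sum on every segment. For smooth, strictly convex $\partial U$, the explicit formulas $\nabla_{x_0}r_\theta = -n(y_\theta)/(n(y_\theta)\cdot\hat\theta)$ and $\mathrm{Hess}(r_\theta) = -\kappa(y_\theta)(n(y_\theta)\cdot\hat\theta)^{-3}\,\hat\theta^\perp\otimes\hat\theta^\perp$ (derived from a Frenet calculation at the boundary point $y_\theta$) reduce the task to a Cauchy--Schwarz-type inequality among $r_\theta$, $r_\theta'$ and $r_\theta''$ integrated over $\theta$. For general bounded convex $U$---in particular polygons, where $r_\theta''$ vanishes for a.e.\ $\theta$ (each $r_\theta$ being piecewise affine in $x_0$) and the concavity of $I_1^U$ is produced instead by the shifting of edge angular-intervals as $x_0$ moves---I expect the argument to be completed either by approximating $U$ by smooth strictly convex domains and passing to the limit, or by accounting for vertex contributions distributionally in the integral formulas above.
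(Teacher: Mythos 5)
Your proposal has a genuine gap, which you acknowledge yourself: after the decomposition
\[
 \pi\,\frac{d^2}{dt^2}v_U(x(t)) = \int_0^{2\pi}\bigl(r_\theta' - (I_1^U)'\bigr)^2\,d\theta + \int_0^{2\pi}\bigl(r_\theta - I_1^U\bigr)\,r_\theta''\,d\theta,
\]
the second integral is sign-indefinite, and the Cauchy--Schwarz-type inequality you invoke to control it against the first term is never stated, let alone proved. Likewise, the approximation-by-smooth-domains or distributional argument that would let your Frenet/curvature Hessian formula cover polygons is left as a hope. The pointwise concavity of $r_\theta$ in $x_0$ (which you prove correctly) yields concavity of $I_1^U$, but as you note, the actual target is strict concavity of $(I_1^U)^2$, and the difference between those two assertions is precisely the unresolved cross-term. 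As it stands, the proposal stops short of a proof.

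The paper's proof proceeds by a different mechanism, and the ingredient you are missing is a closed-form expression for the second directional derivative. Instead of averaging the pointwise Hessian of $r_\theta$ over $\theta$, the paper fixes $z_0$ and a direction $e^{i\sigma}$, perturbs to $z_\delta = z_0 + \delta e^{i\sigma}$, and examines the triangle $T$ with vertices $z_0$, $z_\delta$, and a boundary point $y$. The law of cosines and the law of sines applied to $T$ (Lemmas 3.3--3.5) produce the $\delta$-expansions $d_U(z_\delta,\alpha) = d_U(z_0,\phi) - \delta\cos(\phi-\sigma) + O(\delta^2)$, $\sin(\alpha - \sigma) = \sin(\phi-\sigma)\bigl(1 + \delta\cos(\phi-\sigma)/d_U + O(\delta^2)\bigr)$, and $d\alpha/d\phi = 1 + O(\delta)$. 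Substituting into $I_1^U(z_\delta) = \tfrac{1}{2\pi}\int d_U(z_\delta,\alpha)\,d\alpha$ and changing variables from $\alpha$ to $\phi$ gives the first derivative explicitly, and iterating the argument yields
\[
 D_\sigma^2 I_1^U(z_0) = -\frac{1}{2\pi}\int_0^{2\pi}\frac{\sin^2(\phi-\sigma)}{d_U(z_0,\phi)}\,d\phi < 0,
\]
valid for any bounded convex $U$ with no regularity hypothesis on $\partial U$, so no separate treatment of polygons is required. This is the concrete formula to aim for. Note, however, that this establishes strict concavity of $I_1^U$ itself, whereas you correctly identified that the statement actually needed is strict concavity of $(I_1^U)^2$; passing from one to the other requires the pointwise inequality $I_1^U \cdot \bigl(-D_\sigma^2 I_1^U\bigr) > \bigl(D_\sigma I_1^U\bigr)^2$, which the paper's reduction asserts implicitly but does not spell out. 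Your instinct that a residual Cauchy--Schwarz-type step is hiding here is sound, and is worth pressing on even if you adopt the paper's triangle-expansion machinery.
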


As $v_U$ is non-negative, it immediately follows that $v_U$ has a unique minimum that we call the variocentre. We leave the question of how the variocentre relates to other notions of centre to future work. Returning to our motivation from~\cite{Straw23}, we will see that the distance to the boundary and the variance need not always be inversely related, see Example \ref{ex:2d} below. However, once we are sufficiently close to the boundary, this relationship does hold, as the following result shows.

\begin{theorem}
\label{thm:vvskstatement}
Let $U$ be a bounded convex domain in $\mathbb{R}^{2}$, let $z_{0} \in U$, let $r= d(z_{0} , \partial U)$ and suppose that $e^{i\sigma} \in S^1$ is such that $w:= z_{0} + re^{i\sigma} \in \partial U$. For $\delta \in (0,1)$, set $z_{\delta} = (1-\delta)w + \delta z_{0}$. Then
\[ D_{\sigma} v_U(z_{\delta}) = O \left( \ln \frac{1}{\delta} \right) \]
as $\delta \to 0$.
\end{theorem}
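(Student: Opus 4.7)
The plan is to reduce the problem to a logarithmic estimate on a boundary integral. The starting observation, specific to dimension two, is that $I_2^U$ is in fact constant on $U$: writing $|U|$ in polar coordinates centred at $x$ gives
\[
|U| \;=\; \int_0^{2\pi}\int_0^{\rho(x,\theta)} r\,dr\,d\theta \;=\; \pi\, I_2^U(x),
\]
so $I_2^U(x) \equiv |U|/\pi$. Hence $v_U(x) = |U|/\pi - I_1^U(x)^2$ and
\[
D_\sigma v_U(z_\delta) \;=\; -2\,I_1^U(z_\delta)\,D_\sigma I_1^U(z_\delta).
\]
Since $I_1^U$ extends continuously, and therefore boundedly, to $\overline U$, it suffices to show that $D_\sigma I_1^U(z_\delta) = O(\ln(1/\delta))$.

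Next I would derive a boundary-integral formula for this directional derivative. For $\partial U$ smooth at $p = p(x,\theta)$, differentiating $p = x+\rho e^{i\theta}$ and enforcing $p+\Delta p \in \partial U$ to first order gives $D_\sigma \rho(x,\theta) = -(e^{i\sigma}\cdot\nu(p))/(e^{i\theta}\cdot\nu(p))$, where $\nu$ is the outward unit normal. Changing variable from $\theta$ to arclength $s$ on $\partial U$ via $d\theta = (e^{i\theta}\cdot\nu)/|y(s)-x|\,ds$ cancels the awkward denominator, yielding
\[
D_\sigma I_1^U(x) \;=\; -\frac{1}{2\pi}\oint_{\partial U} \frac{e^{i\sigma}\cdot\nu(y)}{|y-x|}\,ds.
\]
The same formula persists for general convex $\partial U$, either by smooth approximation or by differentiating the boundary representation of $I_1^U(x)=\frac{1}{2\pi}\int_U|y-x|^{-1}\,dA$ obtained from the divergence theorem applied to $\vec F(y)=(y-x)/|y-x|$.

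The final and main step is to estimate this integral at $x=z_\delta$. On the complement of a small arc $\Gamma_w\subset\partial U$ around $w$, the denominator $|y-z_\delta|$ stays bounded away from zero as $\delta\to 0$, contributing $O(1)$. Near $w$, choose coordinates so that $w=0$, $e^{i\sigma}$ is the positive $v$-axis, and $\partial U$ is locally the graph $v=f(u)$ of a concave function with $f(0)=0$, $f\le 0$ (this local graph exists because $e^{i\sigma}$ is a supporting normal at $w$). In these coordinates $z_\delta=(0,-\delta r)$, and one checks that $e^{i\sigma}\cdot\nu(y)\,ds$ simplifies to $du$, reducing the local contribution to
\[
\int_{-a}^{a}\frac{du}{\sqrt{u^2+(f(u)+\delta r)^2}}.
\]
The nearest-point property $d(z_\delta,\partial U)=\delta r$ gives $|y-z_\delta|\ge \delta r$, while $|y-z_\delta|\ge |u|$ is immediate, so $|y-z_\delta|\ge \tfrac{1}{\sqrt 2}\sqrt{u^2+\delta^2 r^2}$. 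The elementary bound $\int_{-a}^a du/\sqrt{u^2+\delta^2 r^2} = 2\sinh^{-1}(a/(\delta r)) = O(\ln(1/\delta))$ then produces the claim.

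The main technical obstacle is the case in which $w$ is a vertex of $\partial U$, so that several outward normals exist at $w$ and $\partial U$ is not smooth near $w$. However, convexity alone guarantees that $\partial U$ is locally a concave graph over any supporting tangent line, and the nearest-point lower bound $|y-z_\delta|\ge \delta r$ is preserved, so the same logarithmic estimate goes through with only cosmetic modifications.
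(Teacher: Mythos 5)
Your route is genuinely different from the paper's, and the intermediate steps are sound: the constancy of $I_2^U$ gives $D_\sigma v_U = -2I_1^U D_\sigma I_1^U$, and your boundary-integral formula $D_\sigma I_1^U(x) = -\tfrac{1}{2\pi}\oint_{\partial U}\bigl(e^{i\sigma}\cdot\nu(y)\bigr)|y-x|^{-1}\,ds$ is correct (one can verify it either as you suggest, or by changing variable $\theta\mapsto s$ in the paper's angular formula \eqref{eq:d2vareq1}, noting $e^{i\theta}\cdot\nu\,ds = |y-x|\,d\theta$). Your localization near $w$ and the bound $|y-z_\delta|\ge\max(\delta r,|u|)\ge\tfrac{1}{\sqrt2}\sqrt{u^2+\delta^2 r^2}$ then deliver $|D_\sigma v_U(z_\delta)|\le C\ln(1/\delta)+O(1)$. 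The paper instead works directly from the representation of Theorem~\ref{thm:vrep}, bounding $d_U(z_\delta,\phi)$ \emph{above} by the tangent line ($\delta r/\cos(\phi-\sigma)$) and $d_U(z_\delta,\phi+\pi)$ \emph{below} by the inscribed circle ($r\sqrt{2\delta-\delta^2}$), and then computes the resulting elliptic-free integrals explicitly.

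The issue is the \emph{direction} of the estimate. You only establish the upper bound $|D_\sigma v_U(z_\delta)|\lesssim\ln(1/\delta)$, which, while literally consistent with the $O(\cdot)$ in the statement, is the easy and less interesting half. The paper's proof culminates in a \emph{lower} bound
\[
D_{\sigma} v_U( z_{\delta} ) \;\geq\; \frac{I_{1}(z_{\delta})}{\pi}\Bigl( \ln 4 - 2 + \ln(2-\delta) + \ln\tfrac{1}{\delta} \Bigr),
\]
and this positivity and unbounded growth is what the surrounding discussion — the claimed inverse relationship between variance and distance to the boundary near $\partial U$, following Example~\ref{ex:2d} — actually requires. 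Your bound $\int_{-a}^a \frac{du}{\sqrt{u^2+\delta^2r^2}}=O(\ln(1/\delta))$ controls the size of the singular contribution from above but says nothing about whether $D_\sigma v_U(z_\delta)$ is positive, large, or even nonzero. To recover the substantive content in your framework you would need a matching lower bound on the local integral, e.g.\ using concavity of $f$ to get $|f(u)|\le M|u|$ on a fixed neighbourhood, whence $\sqrt{u^2+(f(u)+\delta r)^2}\le C(|u|+\delta r)$ and the local integral is $\gtrsim\ln(1/\delta)$ with a positive sign; you would also need to check the far arc cannot cancel it, which is immediate since that contribution is $O(1)$. As written, however, the proposal proves only the upper half of the asymptotic and misses the point the theorem is making.
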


Following on from the example of the ball described above, we may also show that if a convex domain has a boundary that is close to circular, then this inverse relationship between the distance to the boundary and the variance holds once we are outside a compact set.

\begin{theorem}
\label{thm:ellipticstatement}
There exists $\epsilon_{0}>0$, such that if $0< \epsilon < \epsilon _{0}$ and $U$ is a bounded convex domain for which $\partial U$ is contained in $\{z : 1\leq |z| \leq 1+\epsilon \}$, then there exists $r(\epsilon)$ such that if $r(\epsilon) < r <1$ and $|z| = r$, then
\[ D_{\arg z} v_U (z) > 0.\]
Moreover, $r(\epsilon) \to 0$ as $\epsilon \to 0$.
\end{theorem}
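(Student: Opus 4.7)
The plan is to combine the strict convexity of $v_{\mathbb{D}}$ (where $\mathbb{D}$ denotes the open unit disk) with a stability argument that transfers positivity of the radial derivative from $\mathbb{D}$ to nearby convex domains. For $\mathbb{D}$ itself the distance from the origin to $\partial \mathbb{D}$ equals $1$ in every direction, so $I_1^{\mathbb{D}}(0) = I_2^{\mathbb{D}}(0) = 1$, giving $v_{\mathbb{D}}(0) = 0$. Combined with Theorem~\ref{thm:dim2convex} and the non-negativity of $v_{\mathbb{D}}$, the origin is the unique minimum of $v_{\mathbb{D}}$ on $\mathbb{D}$; in particular $v_{\mathbb{D}}(z) > 0$ for every $z \in \mathbb{D} \setminus \{0\}$.

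For the stability step, the hypothesis forces $\mathbb{D} \subset U \subset (1+\epsilon)\mathbb{D}$. Writing $\rho_U(z,\sigma)$ for the distance from $z$ to $\partial U$ in the direction $e^{i\sigma}$, a direct geometric estimate gives
\[ |\rho_U(z,\sigma) - \rho_{\mathbb{D}}(z,\sigma)| \leq C(|z|)\,\epsilon, \]
with $C$ bounded on every compact subset of $\mathbb{D}$. Hence $v_U \to v_{\mathbb{D}}$ uniformly on compact subsets of $\mathbb{D}$ as $\epsilon \to 0$.

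Now fix $r_0 \in (0,1)$ and set $\alpha_0 := \min_{|z|=r_0} v_{\mathbb{D}}(z)$, which is strictly positive by the previous observation. For $\epsilon$ sufficiently small (depending on $r_0$), every admissible $U$ satisfies $v_U(z) > \alpha_0/2$ on $\{|z| = r_0\}$ and $v_U(0) < \alpha_0/4$. Fix such $\epsilon$ and $U$, and for $\phi \in [0, 2\pi)$ set $g(t) := v_U(te^{i\phi})$. By Theorem~\ref{thm:dim2convex}, $g$ is strictly convex, so $g'$ is strictly increasing. Since $g(0) < \alpha_0/4 < \alpha_0/2 < g(r_0)$, the mean value theorem yields some $\xi \in (0, r_0)$ with $g'(\xi) > 0$, and then $g'(r) > g'(\xi) > 0$ for every $r > \xi$. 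In particular $D_\phi v_U(re^{i\phi}) = g'(r) > 0$ for every $r \in (r_0, 1)$. Setting $r(\epsilon) := r_0$ gives the conclusion, and since $r_0 \in (0,1)$ was arbitrary, one can choose $r(\epsilon) \to 0$ as $\epsilon \to 0$.

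The main technical task is to make the uniform convergence $v_U \to v_{\mathbb{D}}$ on compact subsets of $\mathbb{D}$ quantitative, particularly estimating $\rho_U - \rho_{\mathbb{D}}$ for rays that nearly graze $\partial \mathbb{D}$; once this is in hand, the rest of the argument uses only Theorem~\ref{thm:dim2convex}, positivity of $v_{\mathbb{D}}$ away from the origin, and the mean value theorem. One should also briefly justify that the integral means $I_k^U$, and hence $v_U$, are smooth enough for $g'$ to make sense and be monotone on the open line segment through $0$ in direction $e^{i\phi}$.
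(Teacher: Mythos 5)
Your argument is correct, and it is genuinely different from the paper's. The paper proceeds via explicit computation: it replaces $U$ by a starlike comparison domain $V$ whose boundary consists of arcs of the circles of radius $1$ and $1+\epsilon$, evaluates the resulting integral from Theorem~\ref{thm:vrep} in closed form in terms of the complete elliptic integrals $K$ and $E$, and then analyzes the resulting function $F(r,\epsilon)$ asymptotically as $r\to 0^+$, $r\to 1^-$, and $\epsilon\to 0$. Your argument is instead a ``soft'' one: it uses the strict convexity established in Theorem~\ref{thm:dim2convex} together with the fact that $v_{\mathbb{D}}$ vanishes only at the origin, and then a stability estimate showing $v_U\to v_{\mathbb{D}}$ uniformly on compact subsets of $\mathbb{D}$. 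Once $v_U(0)$ is small and $v_U$ is uniformly bounded away from zero on $\{|z|=r_0\}$, strict convexity of $t\mapsto v_U(te^{i\phi})$ forces the radial derivative to be positive for all $r\geq r_0$; in fact one can dispense with the mean value theorem and use the elementary monotonicity of secant slopes for convex functions, so the only regularity truly needed is that the radial directional derivative exists, which the paper's computation supplies. What your approach buys is conceptual transparency and brevity, and it makes clear that Theorem~\ref{thm:ellipticstatement} is really a corollary of Theorem~\ref{thm:dim2convex} plus continuity; what the paper's approach buys is an explicit quantitative lower bound $F(r,\epsilon)$ for $D_{\arg z}v_U$, from which one can in principle extract concrete thresholds $\epsilon_0$ and $r(\epsilon)$. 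Two small points worth writing out explicitly in your version: (i) the hypothesis $\partial U\subset\{1\leq|z|\leq 1+\epsilon\}$ should be read as also forcing $\mathbb{D}\subset U$ (a convex set whose closure fits inside the thin annulus would have diameter $O(\sqrt\epsilon)$ and a boundary chord crossing $|z|<1$, so this is automatic once $\epsilon$ is small, but say so); and (ii) the claimed bound $|\rho_U(z,\sigma)-\rho_{\mathbb{D}}(z,\sigma)|\leq C(|z|)\epsilon$ follows because for $|z|\leq r_0$ every ray from $z$ meets the unit circle at an angle to the radius that is at most $\arcsin r_0$, giving the explicit constant $C=1/\sqrt{1-r_0^2}$ --- this is the ``grazing ray'' estimate you flag, and it is the same geometric fact the paper uses in Figure~\ref{fig:4}.
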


We also study variance in dimension three, although our results are somewhat less complete. In particular, we do not obtain convexity of $v_U$ in this setting, although we expect this to still be true. We do obtain the following analogues of Theorem \ref{thm:vvskstatement} and Theorem \ref{thm:ellipticstatement}.

\begin{theorem}
\label{thm:vvsk3dstatement}
Let $U$ be a bounded convex domain in $\mathbb{R}^{3}$, let $x_{0} \in U$, let $r= d(x_{0} , \partial U)$ and suppose that $\sigma \in S^{2}$ is such that $w:= x_{0} + r\sigma \in \partial U$. For $\delta \in (0,1)$, set $x_{\delta} = (1-\delta)w + \delta x_{0}$. Then
\[ D_{\sigma} v_U(x_{\delta}) = O \left( \ln \frac{1}{\delta} \right) \]
as $\delta \to 0$.
\end{theorem}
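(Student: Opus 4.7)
The plan is to follow the same overall strategy as the proof of Theorem \ref{thm:vvskstatement}, replacing integration over $S^{1}$ by integration over $S^{2}$ in spherical coordinates adapted to $\sigma$. Writing $v_{U} = I_{2}^{U} - (I_{1}^{U})^{2}$ and differentiating gives
\[ D_{\sigma} v_{U}(x_{\delta}) = D_{\sigma} I_{2}^{U}(x_{\delta}) - 2 I_{1}^{U}(x_{\delta}) D_{\sigma} I_{1}^{U}(x_{\delta}). \]
Since $\rho(x,\tau)$ is bounded by $\operatorname{diam} U$, the quantity $I_{1}^{U}(x_{\delta})$ remains bounded as $\delta \to 0$, so it suffices to establish $D_{\sigma} I_{1}^{U}(x_{\delta}) = O(\ln(1/\delta))$ and $D_{\sigma} I_{2}^{U}(x_{\delta}) = O(1)$.

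Differentiating under the integral sign, at almost every $\tau \in S^{2}$ the boundary point $y(x_{\delta},\tau) := x_{\delta} + \rho(x_{\delta},\tau)\tau$ lies where $\partial U$ has a unique supporting hyperplane with outward unit normal $\nu$ (by convexity), and implicit differentiation gives
\[ D_{\sigma} \rho(x_{\delta}, \tau) = - \frac{\langle \nu, \sigma \rangle}{\langle \nu, \tau \rangle}. \]
I would parametrize $\tau$ by spherical coordinates $(\theta,\phi)$ with $\sigma$ as the north pole and split $S^{2}$ into a polar cap $A_{\delta} = \{\theta \leq \theta_{0}(\delta)\}$ and its complement. The cap is chosen so that for $\tau \in A_{\delta}$ the corresponding $y$ lies in a small fixed neighbourhood of $w$ on which $\partial U$ is well approximated by its tangent hyperplane at $w$. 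Because $w$ realises $d(x_{0},\partial U) = r$, the outward normal at $w$ is exactly $\sigma$, so in the tangent-plane model $\rho \approx \delta r/\cos\theta$ and $D_{\sigma} \rho \approx -1/\cos\theta$, with $\cos\theta_{0}$ chosen comparable to an appropriate positive power of $\delta$ to keep the approximation valid.

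On $A_{\delta}$ the spherical measure $\sin\theta\,d\theta\,d\phi$ reduces the two integrals to elementary ones:
\[ \int_{A_{\delta}} D_{\sigma} \rho \, d\tau \approx -2\pi \int_{0}^{\theta_{0}} \tan\theta \, d\theta = 2\pi \ln \cos\theta_{0} = O(\ln(1/\delta)), \]
\[ \int_{A_{\delta}} \rho \, D_{\sigma} \rho \, d\tau \approx -2\pi \delta r \int_{0}^{\theta_{0}} \sec\theta \tan\theta \, d\theta = -2\pi \delta r (\sec\theta_{0} - 1) = O(1), \]
where the last equality uses that $\delta \sec\theta_{0}$ is bounded by the choice of $\theta_{0}$. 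On the complement $S^{2} \setminus A_{\delta}$, every hit-point $y$ is bounded away from $w$; by compactness and convexity $\rho$ is bounded and $\langle \nu, \tau \rangle$ is bounded away from $0$, so both integrals contribute $O(1)$. Combining yields $D_{\sigma} I_{1}^{U}(x_{\delta}) = O(\ln(1/\delta))$ and $D_{\sigma} I_{2}^{U}(x_{\delta}) = O(1)$, which gives the claimed bound.

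The main obstacle is controlling the error in replacing $\partial U$ on $A_{\delta}$ by the tangent plane at $w$, given that $\partial U$ is only convex and need not be $C^{1}$. I would address this either by approximating $U$ by smooth convex domains and verifying that the above estimates are stable under the approximation, or by arguing directly from the supporting-hyperplane description that the correction terms are of strictly lower order than the leading logarithmic and bounded contributions. Verifying uniform bounds on $\langle \nu, \tau \rangle$ off $A_{\delta}$, i.e.\ ruling out near-tangential directions whose hit-points happen to drift back toward $w$, is the other fiddly point; convexity together with $x_{\delta}$ staying in a fixed compact subset of $\overline{U}$ close to $w$ should supply what is needed, in direct analogy with the two-dimensional argument.
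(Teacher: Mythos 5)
Your route is structurally different from the paper's. The paper first integrates by parts (Theorem~\ref{thm:vrep3d}) to write $D_{\sigma}v$ as $\frac{1}{4\pi}\int_0^{2\pi}\int_0^{\pi/2}\sin(2\phi)\,G(\phi,\theta)\,d\phi\,d\theta$ with $G$ pairing each direction with its antipode, and then bounds $G$ using only two elementary convexity facts: the supporting hyperplane at $w$ gives $d_U(x_\delta,\phi,\theta)\le\delta r/\cos\phi$ for $\phi<\pi/2$, while the inscribed ball $B(x_0,r)\subset U$ gives $d_U(x_\delta,\phi-\pi,\theta)\ge r\sqrt{2\delta-\delta^2}$. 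No pointwise control of $D_\sigma\rho$, no cap/complement split, and no boundary regularity beyond convexity are needed. You instead differentiate $\rho$ directly via $D_\sigma\rho=-\langle\nu,\sigma\rangle/\langle\nu,\tau\rangle$ and split $S^2$ into a polar cap and its complement.

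The off-cap estimate is not merely fiddly; as stated it is false. You claim that on $S^{2}\setminus A_\delta$ the hit point stays bounded away from $w$ and $\langle\nu,\tau\rangle$ stays bounded away from $0$, but neither is uniform in $\delta$, even for the unit ball: with $x_\delta=(1-\delta,0,0)$, $\sigma=(1,0,0)$ and $\tau=(0,1,0)$ (so $\theta=\pi/2$, outside any cap), the hit point is $(1-\delta,\sqrt{2\delta-\delta^{2}},0)$, at distance $\sim\sqrt{2\delta}$ from $w$, with $\langle\nu,\tau\rangle=\sqrt{2\delta-\delta^{2}}\to0$ and $|D_\sigma\rho|\sim(2\delta)^{-1/2}\to\infty$. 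So the integrand on the complement is not pointwise bounded, and the $O(1)$ conclusion there --- though in fact true --- needs an integrability/cancellation argument (e.g.\ via the Jacobian identity $\langle\nu,\tau\rangle\,dA(\tau)=\rho^{-2}\,dS(y)$) that the proposal does not supply. Similarly, the two-sided approximations $\rho\approx\delta r/\cos\theta$ and $D_\sigma\rho\approx-1/\cos\theta$ on the cap require $C^{1,1}$-type regularity of $\partial U$ at $w$, which is not assumed; the paper only ever uses the one-sided inequality $\rho\le\delta r/\cos\phi$ from convexity together with a separate lower bound on the antipodal distance, so the regularity problem never arises. Your plan could likely be repaired by adding an equatorial band and using the Jacobian identity, but as written both the cap approximation and the complement estimate are genuine gaps that the paper's route via Theorem~\ref{thm:vrep3d} avoids from the start.
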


\begin{theorem}
\label{thm:elliptic3dstatement}
There exists $\epsilon_{0}>0$ such that if $0< \epsilon < \epsilon _{0}$ and $U\subset \mathbb{R}^{3}$ is a bounded convex domain for which $\partial U$ is contained in $\{x\in \mathbb{R}^{3} : 1\leq |x| \leq 1+\epsilon \}$, then there exists $r(\epsilon)$ such that if $r(\epsilon) < r <1$ and $|x| = r$, then
\[ D_{x/|x|} v_U (x) > 0.\]
\end{theorem}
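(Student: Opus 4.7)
The plan is to mirror the strategy of Theorem~\ref{thm:ellipticstatement}, performing a perturbation analysis starting from the unit ball $B = B(0,1) \subset \mathbb{R}^{3}$, which is the limiting case as $\epsilon \to 0$. By rotational symmetry $v_B$ is radial, $v_B(x) = F(|x|)$, and for $r=|x|$ the chord length from $x$ to $\partial B$ in the direction $\sigma \in S^2$ with $\langle \sigma, x/|x|\rangle = \cos\theta$ is $-r\cos\theta + \sqrt{1 - r^2 \sin^2\theta}$. Hence each $I_k^B(x)$ reduces, after integrating out the azimuthal angle, to a one-dimensional integral in $\theta$ with weight $\sin\theta$. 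The first step is to compute $F$ explicitly enough (or differentiate under the integral sign) to verify that $F'(r) > 0$ for all $r\in(0,1)$, and to extract a quantitative positive lower bound on $F'$ on any closed subinterval of $(0,1)$.

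Next I would establish a $C^1$-stability estimate for $v_U$. If $B \subset U \subset (1+\epsilon) B$, then for each $\sigma \in S^{2}$, convexity forces $|d_\sigma^U(x) - d_\sigma^B(x)| \leq C\epsilon$ uniformly in $\sigma$ and in $x$ lying in a fixed compact set $K \subset B \setminus \partial B$. A similar bound holds for the radial derivative $\partial_{x/|x|} d_\sigma^U$: both chord functions are concave in $x$ along lines transverse to $\sigma$, so one-sided derivatives are controlled by the Hausdorff distance between $\partial U$ and $\partial B$. Substituting these into the integral definitions of $I_1^U$ and $I_2^U$ and using that $v_U = I_2^U - (I_1^U)^2$, dominated convergence allows differentiation under the spherical integral to give $|D_{x/|x|}v_U(x) - D_{x/|x|}v_B(x)| \leq C_K \epsilon$ on $K$. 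Combined with the lower bound from the first step, this produces positivity on an annular shell, say $r(\epsilon) \leq |x| \leq 1 - \sqrt{\epsilon}$, with $r(\epsilon)\to 0$ as $\epsilon\to 0$.

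For the near-boundary regime $1 - \sqrt{\epsilon} < |x| < 1$ one appeals to the proof, not merely the statement, of Theorem~\ref{thm:vvsk3dstatement}: although that theorem records only an $O(\ln(1/\delta))$ upper bound on the magnitude of the radial derivative, its proof should identify the dominant term as a \emph{positive} quantity of exact order $\ln(1/\delta)$, arising from the singular contribution to $I_2^U$ of directions $\sigma$ clustered near the outward normal at the nearest boundary point. The main obstacle, and the genuinely new difficulty in three dimensions, is confirming that this sign information survives the spherical integration: one must check that the extra $\sin\theta$ weight on $S^2$ (as opposed to uniform arc length on $S^1$) does not wash out the logarithmic divergence responsible for the positive sign, and that the near-normal solid-angle contribution continues to dominate both the bounded pieces of $I_1^U$ and the non-singular part of $I_2^U$. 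Once that sign is extracted, patching the near-boundary estimate with the perturbation estimate from the previous step delivers the desired inequality for all $r \in (r(\epsilon), 1)$.
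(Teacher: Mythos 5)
Your proposal takes a genuinely different route from the paper. The paper's argument is a single direct estimate: after an isometry placing the point of interest appropriately, it invokes the representation of Theorem~\ref{thm:vrep3d}, bounds the non-logarithmic part of $G(\phi,\theta)$ by $\operatorname{diam}U$, and replaces $U$ by a comparison domain $V_r$ that is starlike about the point, with $\partial V_r$ lying on the inner sphere behind the point and the outer sphere in front. This yields explicit one-variable integrals (via $u=r\sin\phi$ and integration by parts, producing $\tanh^{-1}$ terms). Letting $r\to 1^-$ shows the lower bound diverges to $+\infty$, so it is positive once $\epsilon$ is small, and the threshold $r(\epsilon)$ is read off from that single expression. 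In particular the paper never needs an interior perturbation step, and it explicitly states it does \emph{not} obtain $r(\epsilon)\to 0$; your proposal aims for a strictly stronger conclusion than the theorem claims.

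The main gap is in your near-boundary step. The estimates in the proof of Theorem~\ref{thm:vvsk3dstatement} control $D_\sigma v_U(x_\delta)$ only for $\sigma$ the direction from $x_\delta$ to a \emph{nearest} point $w$ of $\partial U$: the inequality $d_U(x_\delta,\phi,\theta)\le \delta r/\cos\phi$ uses the supporting hyperplane of $U$ at $w$ perpendicular to $\sigma$, and the inequality $d_U(x_\delta,\phi,\theta)\ge r\sqrt{2\delta-\delta^2}$ uses the inscribed ball of radius $r$ about $x_0$. Neither is available for the direction $x/|x|$ you actually need. These two directions genuinely diverge in the regime you assign to this step: with $\eta=1-|x|$ and $\partial U$ constrained only to $\{1\le|y|\le 1+\epsilon\}$, the angle between $x/|x|$ and the nearest-boundary direction satisfies roughly $\cos\beta\gtrsim(\eta-\epsilon^2/2)/(\eta+\epsilon)$, which is bounded away from $1$ once $\eta\lesssim\epsilon$ --- and the band $1-\sqrt{\epsilon}<|x|<1$ contains all such $x$. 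So you cannot transfer the sign from $D_\sigma$ to $D_{x/|x|}$. The concern you flagged (whether the $\sin$ weight on $S^2$ kills the logarithm) is in fact already settled positively in the proof of Theorem~\ref{thm:vvsk3dstatement}; the real obstacle is this direction mismatch. To repair it one must replace the supporting-hyperplane and inscribed-ball bounds by the inner/outer sphere comparisons $d_B(x,\cdot)\le d_U(x,\cdot)\le d_{(1+\epsilon)B}(x,\cdot)$, which is precisely the paper's $V_r$ device and leads to the elliptic-type integrals rather than the elementary $\cos\phi$ bound.

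A smaller issue: the interior $C^1$-stability claim is plausible since $x\mapsto d_U(x,\sigma)$ is indeed concave, but the concavity-plus-$C^0$-closeness argument yields a rate like $O(\sqrt{L\epsilon})$ with $L$ the local second-derivative bound of $d_B(\cdot,\sigma)$, not $O(\epsilon)$; and $L$ blows up as the compact set approaches $\partial B$ (grazing directions), so the dependence of $C_K$ on $K$ must be tracked if one wants $r(\epsilon)\to 0$. As written the proposal does not verify that the interior region can be pushed close enough to $\partial B$ to meet a corrected near-boundary region.
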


The paper is organized as follows. 
In Section~\ref{sec:Preliminaries} we properly define the variance function, give some of its basic properties, as well as recall relevant definitions and properties of elliptic integrals that will be needed in the dimension $2$ analysis.
In Section~\ref{sec:Dimension_2} we focus on dimension $2$ and provide the proofs of Theorem \ref{thm:dim2convex}, Theorem \ref{thm:vvskstatement} and Theorem \ref{thm:ellipticstatement}.
Finally, in Section~\ref{sec:Dimension_3} we move to dimension $3$ and provide the proofs for Theorem \ref{thm:vvsk3dstatement} and Theorem \ref{thm:elliptic3dstatement}.

\section{Preliminaries} \label{sec:Preliminaries}

\subsection{Distance to the boundary}

We begin by formalizing the definitions outlined at the start of Section~\ref{sec:Introduction}. Let $n\geq 2$ and let $U \subset \mathbb{R}^{n}$ be a non-empty bounded convex domain. In this paper we will focus on dimensions $2$ and $3$, but our notions can be applied in any dimension.

\begin{definition}
Let $U \subset \mathbb{R}^{n}$ be a bounded convex domain.
\begin{enumerate}[label=(\alph*)]
\item For $x_{0} \in U$ and $\sigma \in S^{n-1}$, denote by $d_U(x_{0},\sigma)$ the distance from $x_{0}$ to $\partial U$ in the direction $\sigma$. More precisely, if $\gamma$ is the intersection of $\{ v = x_{0} + t\sigma : t \geq 0 \}$ with $U$, then $d_U(x_{0} , \sigma)$ is the length of $\gamma$. 
\item For $x_{0} \in U$ and $k\in \mathbb{N}$, we define
\[ I^{U}_k(x_{0}) = \frac{1}{A(S^{n-1})} \int_{S^{n-1}} d_U(x_{0} , \sigma )^k \: dA,\]
where $dA$ denotes a spherical volume element of $S^{n-1}$ and $A(S^{n-1})$ is the $(n-1)$-dimensional volume of the unit sphere in $\mathbb{R}^{n}$.
\item We define the variance of the distance to the boundary of $U$ from $x_{0}\in U$ to be
\[ v_U(x_{0}) = I_{2}^{U}(x_{0}) - [ I_{1}^{U}(x_{0}) ] ^{2}.\]
\end{enumerate}
\end{definition}

If the context is clear, we may suppress the $U$ notation and just write $d(x_{0},\sigma), I_k(x_{0}), v(x_{0})$ etc. While we initially define $v_U$ on $U$, it may be extended to $\partial U$ either by continuity, or using the definition above and allowing the distance from $x_{0}$ to $\partial U$ in certain directions to be zero. Observe that $I_{1}(x_{0})$ is the average distance to the boundary of $U$ from $x_{0}$. It is elementary to check that if $U$ is a ball and $x_{0}$ is its centre, then $I_{1}(x_{0})$ is the radius of the ball, and $v(x_{0})$ is zero.

We start by giving some general properties of the variance.

\begin{theorem}
\label{thm:props}
Let $U$ be a bounded convex domain in $\mathbb{R}^{n}$. 
\begin{enumerate}[label=(\alph*)]
\item If $f:U \to \mathbb{R}^{n}$ is an isometry, then 
\[ v_{f(U)} ( f(x) ) = v_U (x).\]
\item If $f:U \to \mathbb{R}^{n}$ is a similarity with scaling factor $C>0$, then 
\[ v_{f(U)} ( f(x) ) = C^{2} v_U (x).\]
\end{enumerate}
\end{theorem}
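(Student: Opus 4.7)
The plan is to establish both parts by tracking how the spherical integrals $I_{k}^{U}$ transform under the map $f$, and then to read off the conclusion from the defining formula $v_{U}=I_{2}^{U}-[I_{1}^{U}]^{2}$. So the work reduces to proving that $I_{k}^{f(U)}(f(x))$ equals $I_{k}^{U}(x)$ in case (a) and $C^{k}I_{k}^{U}(x)$ in case (b).

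For (a), I would write any isometry as $f(x)=Ax+b$ with $A\in O(n)$ and $b\in\mathbb{R}^{n}$; the translation by $b$ preserves every ingredient in the definition, so the task reduces to the case of an orthogonal map $A$. The geometric point is that $A$ sends the ray $\{x+t\sigma:t\geq 0\}$ onto the ray $\{Ax+tA\sigma:t\geq 0\}$ while preserving length, so $d_{A(U)}(Ax,A\sigma)=d_{U}(x,\sigma)$. I would then substitute this into the definition of $I_{k}^{f(U)}(f(x))$ and perform the change of variables $\tau=A\sigma$ in the spherical integral, invoking the standard $O(n)$-invariance of the spherical volume element $dA$ to conclude that $I_{k}^{f(U)}(f(x))=I_{k}^{U}(x)$. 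Part (a) follows at once.

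For (b), I would decompose the similarity as an isometry composed with the central dilation $x\mapsto Cx$, so by (a) only the dilation case needs attention. Here the ray from $x$ in direction $\sigma$ maps to the ray from $Cx$ in the \emph{same} direction $\sigma$, with all lengths multiplied by $C$, giving $d_{CU}(Cx,\sigma)=C\,d_{U}(x,\sigma)$ directly. Pulling $C^{k}$ outside the integral yields $I_{k}^{CU}(Cx)=C^{k}I_{k}^{U}(x)$, and then
\[ v_{CU}(Cx)=I_{2}^{CU}(Cx)-\bigl[I_{1}^{CU}(Cx)\bigr]^{2}=C^{2}I_{2}^{U}(x)-C^{2}\bigl[I_{1}^{U}(x)\bigr]^{2}=C^{2}v_{U}(x). \]

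I do not anticipate any real obstacle: the argument is essentially two clean change-of-variables computations, relying on the facts that orthogonal maps act as measure-preserving isometries of $S^{n-1}$ and that a central dilation scales every chord from $x$ by the same factor $C$. The only point needing care is to record explicitly the decomposition of a general isometry (respectively similarity) into its rigid (respectively rigid and scaling) parts so that the two sub-cases combine to prove the theorem as stated.
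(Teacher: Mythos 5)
Your proposal is correct and takes essentially the same approach as the paper: in both cases one shows $I_{k}^{f(U)}(f(x))=C^{k}I_{k}^{U}(x)$ (with $C=1$ for an isometry) by a change of variables in the spherical integral and then substitutes into $v=I_{2}-I_{1}^{2}$. The only minor difference is in the dilation step: the paper first conjugates by a translation to reduce to $x=0$ before applying the scaling, whereas you observe directly that the central dilation $x\mapsto Cx$ sends the ray from $x$ in direction $\sigma$ to the ray from $Cx$ in the same direction with all chord lengths scaled by $C$, which handles general $x$ in one stroke; both are equally valid, and your version of part (a) usefully spells out the $O(n)$-invariance that the paper leaves as "an elementary change of variables."
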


These were proved in dimension $2$ in \cite{FFW23}, but for the convenience of the reader, we prove these in the general setting here. 

\begin{proof}
Let $k\in \mathbb{N}$ and let $f$ be an isometry. Then, by an elementary change of variables, we have
\begin{align*} 
I^{f(U)}_k(f(x)) &=  \frac{1}{A(S^{n-1})} \int_{S^{n-1}} d_{f(U)}(f(x) , \sigma )^k \: dA \\
&=  \frac{1}{A(S^{n-1})} \int_{S^{n-1}} d_{U}(x , \sigma )^k \: dA \\
&= I^{U}_k(x).
\end{align*}
Combining this with the formula for $v_U$, this proves (a).

If $f$ is a dilation by factor $C>0$, we may assume $x=0$ by conjugating by a translation and using part (a). Then, for $k\in \mathbb{N}$,
\begin{align*}
I^{f(U)}_k(0) &=  \frac{1}{A(S^{n-1})} \int_{S^{n-1}} d_{f(U)}(0 , \sigma )^k \: dA\\
&=  \frac{C^k}{A(S^{n-1})} \int_{S^{n-1}} d_{U}(0 , \sigma )^k \: dA \\
&= C^k I^{U}_k(0).
\end{align*}
Combining this with the formula for $v_U$, this proves (b) in this case.
For the general case, by decomposing the similarity as an isometry followed by a dilation and applying the above computation and part (a), we obtain (b).
\end{proof}

Recall that the Hausdorff metric between two sets $U,V$ in $\mathbb{R}^{n}$ is given by
\[ d_H(U,V) = \max \left \{ \sup_{x\in U} d(x, V) , \: \sup_{y\in V} d(y,U) \right \} ,\]
where $d(x,V) = \inf_{y\in V} d(x,y)$. Using this metric gives us a way to talk about the convergence of sets: we say that $U_m$ converges to $U_{0}$ if $d_H(U_m,U_{0}) \to 0$ as $m\to \infty$. If we assume all our sets are bounded convex domains, then we may equivalently state that given $\epsilon >0$, we can find $M\in \mathbb{N}$ so that $\partial U_m$ is contained in the $\epsilon$-neighbourhood $N_{\epsilon}(\partial U_{0})$ of $\partial U_{0}$ for $m\geq M$ (see Figure~\ref{fig:1}).

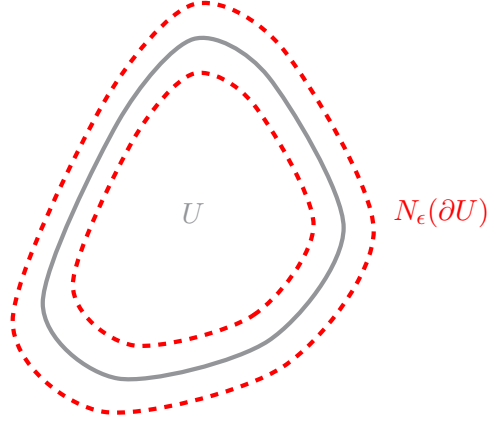
\begin{figure}[ht]
\begin{center}
\begin{tikzpicture}
\draw [gray, ultra thick] plot [smooth cycle] coordinates {(-1,-2.2) (1,-1.7) (2,-0.2) (1,1.8) (0,2.3) (-1,1.2) (-2,-1.2)};
\begin{scope}[scale=1.2]
\draw [red, ultra thick, dashed] plot [smooth cycle] coordinates {(-1,-2.2) (1,-1.7) (2,-0.2) (1,1.8) (0,2.3) (-1,1.2) (-2,-1.2)};
\end{scope}
\begin{scope}[scale=0.8]
\draw [red, ultra thick, dashed] plot [smooth cycle] coordinates {(-1,-2.2) (1,-1.7) (2,-0.2) (1,1.8) (0,2.3) (-1,1.2) (-2,-1.2)};
\end{scope}
\node[text=gray] at (0,0) {$U$};
\node[text=red] at (3.3,0) {$N_{\epsilon}(\partial U)$};
\end{tikzpicture}
\caption{An $\epsilon$-neighbourhood of $\partial U$.}
\label{fig:1}
\end{center}
\end{figure}

\begin{theorem}
\label{thm:converge}
Let $(U_m)_{m=1}^{\infty}$ and $U_{0}$ be bounded convex domains with $d_H(U_m,U_{0}) \to 0$ as $m\to \infty$. Then for any $x_{0} \in U$, $\sigma \in S^{n-1}$ and $k\in \mathbb{N}$,
\[ | d_{U_m}(x_{0} , \sigma) - d_{U_{0}} ( x_{0} , \sigma) | \to 0,\]
\[ | I_k^{U_m}(x_{0}) - I_k^{U_{0}}(x_{0}) | \to 0,\]
and
\[ |v_{U_m}(x_{0}) - v_{U_{0}}(x_{0}) | \to 0\]
as $m\to \infty$.
\end{theorem}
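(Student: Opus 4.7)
The plan is to treat the three convergence statements sequentially: the first is the substantive geometric step, and the remaining two reduce to it by standard analytic considerations.

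For the convergence $d_{U_m}(x_{0},\sigma)\to d_{U_{0}}(x_{0},\sigma)$, the key observation is that $x_{0}\in U_{0}$ is interior, so $r := d(x_{0},\partial U_{0})>0$, and for $m$ sufficiently large Hausdorff convergence forces $\partial U_m$ to miss the ball $B(x_{0},r/2)$, whence $x_{0}\in U_m$. By convexity, for each $m$ the intersection of the ray $\{x_{0}+t\sigma:t\geq 0\}$ with $U_m$ is a single interval, and $d_{U_m}(x_{0},\sigma)$ is the length of that interval. I would argue by contradiction: if the sequence of distances did not converge to $d_{U_{0}}(x_{0},\sigma)=:d_{0}$, pass to a subsequence with $d_{U_m}(x_{0},\sigma)\to d^{\ast}\neq d_{0}$. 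Choose $t$ strictly between $d^{\ast}$ and $d_{0}$; then $p:=x_{0}+t\sigma$ lies in one of $U_m,U_{0}$ for large $m$ while lying at a fixed positive distance from the other (this distance is determined by the gap between $t$ and the corresponding exit time on the ray, hence bounded below). This contradicts $d_{H}(U_m,U_{0})\to 0$ once $\epsilon$ is chosen small enough.

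Given pointwise convergence on $S^{n-1}$, the convergence $I_k^{U_m}(x_{0})\to I_k^{U_{0}}(x_{0})$ follows from dominated convergence. Since $d_{H}(U_m,U_{0})\to 0$, all $U_m$ for large $m$ are contained in a fixed bounded neighbourhood of $U_{0}$, so there is a uniform bound $d_{U_m}(x_{0},\sigma)\leq R$. Hence $d_{U_m}(x_{0},\sigma)^k\leq R^k$ is a constant (and so integrable) majorant on the finite-measure space $S^{n-1}$, and DCT yields the claim. The final convergence $v_{U_m}(x_{0})\to v_{U_{0}}(x_{0})$ is then immediate from the definition $v_U = I_2^{U}-(I_1^{U})^2$ together with continuity of the map $(a,b)\mapsto a-b^{2}$.

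The main obstacle is the first step: controlling how the one-dimensional section of $U_m$ along a given ray converges to that of $U_{0}$ purely from global Hausdorff information. This is precisely where convexity of the $U_m$ enters, since without it the ray could re-enter and exit the sets and the length of the intersecting segment would be much harder to pin down; the remaining two steps are routine once pointwise convergence of the distance function is established.
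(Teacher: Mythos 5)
Your proposal is correct, and it reaches the same endpoint as the paper by a slightly different route. For the core step $d_{U_m}(x_{0},\sigma)\to d_{U_{0}}(x_{0},\sigma)$, the paper argues directly: taking $\epsilon<\delta/2$ with $\delta=d(x_{0},\partial U_{0})$ and $m$ large enough that $\partial U_m\subset N_{\epsilon}(\partial U_{0})$, it asserts $|y_m-y_{0}|<\epsilon$ for the two exit points on the ray and deduces uniform-in-$\sigma$ convergence, which feeds a telescoping bound $|a^k-b^k|\leq kR^{k-1}|a-b|$ for the moments. You instead argue by compactness and contradiction for the exit point, then invoke dominated convergence for the moments with the constant majorant $R^k$. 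Both proofs rest on the same geometric fact that your ``hence bounded below'' compresses: because $B(x_{0},\delta)\subset U_{0}$, the cone with apex at the exit point over $B(x_{0},\delta)$ sits inside $\overline{U_{0}}$, so a parameter gap $|t-t_{0}|$ along the ray forces a Euclidean distance at least $(\delta/\operatorname{diam}U_{0})|t-t_{0}|$ from the set, turning your ray gap into a genuine violation of Hausdorff smallness. It is worth noting that the paper's own version of this step, the inequality $|y_{0}-y_m|<\epsilon$, is in fact not literally true as stated (only $|y_{0}-y_m|\leq C\epsilon$ with $C$ depending on $U_{0}$ and $x_{0}$), so your qualitative contradiction argument is, if anything, the cleaner way to avoid that constant; the tradeoff is that the paper's direct estimate, once patched with the constant $C$, gives an explicit rate and exhibits uniformity in $\sigma$, whereas your DCT route is lighter but purely qualitative. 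Your final diagnosis of where convexity matters is slightly off emphasis: it is not just that the ray exits the set only once, but that with $x_{0}$ interior the crossing is quantitatively transversal, which is what makes the Hausdorff hypothesis control the exit parameter at all.
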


Note that if $x_{0} \in U_{0}$, then it may not be the case that $x_{0} \in U_m$ for all $m$, but certainly for all large enough $m$ this will be the case and so the conclusion in this result makes sense.

\begin{proof}
Fix $x_{0} \in U_{0}$ and $\sigma \in S^{n-1}$. Let $\delta = d(x_{0} , \partial U_{0}) >0$. We may choose $M\in \mathbb{N}$ large enough so that $\partial U_m$ is contained in $N_{\delta /2}(\partial U_{0})$ for $m\geq M$. We may then define $y_{0}$ and $y_m$, for $m\geq M$, to be the unique point on $\partial U_{0}$ and $\partial U_m$, respectively, from $x_{0}$ in the direction $\sigma$.

Now, given $\epsilon \in (0,\delta/2)$, we may find $M' \geq M$ such that $\partial U_m$ is contained $N_{\epsilon}(\partial U_{0})$ for $m\geq M'$. It follows that $|y_{0} - y_m| < \epsilon$ for $m\geq M'$. We conclude that $y_m \to y_{0}$ and hence 
\[| d_{U_m}(x_{0} , \sigma) - d_{U_{0}} ( x_{0} , \sigma) | \to 0\]
as $m\to \infty$.
Observe that this convergence is uniform over $\sigma \in S^{n-1}$. We therefore have, for $k\in \mathbb{N}$ and $m\geq M'$, 
\begin{align*}
| I_k^{U_m}(x_{0}) - I_k^{U_{0}}(x_{0}) | &= \frac{1}{A(S^{n-1})} \left | \int_{S^{n-1}} \left( d_{U_m}(x_{0} , \sigma)^k - d_{U_{0}}(x_{0} , \sigma) ^k \right) \: dA \right | \\
&\leq \sup_{\sigma \in S^{n-1} } \left | d_{U_m}(x_{0} , \sigma) - d_{U_{0}}(x_{0} , \sigma)  \right | \cdot \left( \sum_{ i = 0}^{k-1} d_{U_m}(x_{0} , \sigma)^i d_{U_{0}}(x_{0},\sigma)^{k-1-i} \right) \\
& \leq \epsilon \cdot k(\operatorname{diam} U_{0} + 2\epsilon)^{k-1}.
\end{align*}
It follows that
\[ | I_k^{U_m}(x_{0}) - I_k^{U_{0}}(x_{0}) | \to 0\]
as $m\to \infty$. It then follows from the formula for variance that
\[ |v_{U_m}(x_{0}) - v_{U_{0}}(x_{0}) | \to 0\]
as $m\to \infty$.
\end{proof}

Key to our study is to understand regularity properties of the variance. It may not be the case that $v_U$ is differentiable as a function $U \to \mathbb{R}^{+}$, but it will be profitable to study the directional derivatives. Recall that if $\sigma \in S^{n-1}$, then the directional derivative of $v_U$ in the direction $\sigma$, if it exists, is given by
\[ D_{\sigma} v _U(x_{0}) = \lim_{t\to 0} \frac{ v_U(x_{0} + t\sigma) - v_U(x_{0}) } {t} .\]

\subsection{Elliptic integrals}

We recall here some material on elliptic integrals that will be important in the sequel. Let $0<k<1$. The complete elliptic integral of the first kind is defined by
\begin{equation}
\label{eq:ellK} 
K(k) = \int_{0}^{\pi/2} \frac{d\theta}{\sqrt{1-k^{2}\sin^{2}\theta}} = \int_{0}^1 \frac{ dx}{ \sqrt{ (1-x^{2})(1-k^{2}x^{2}) } } .
\end{equation}
This can be expressed as the following power series:
\[ K(k)  = \frac{\pi}{2} \sum_{n=0}^{\infty} \left( \frac{ (2n)! }{2^{2n} (n!)^{2} } \right)^{2} k^{2n}.\]
Evidently the integral expressions for $K(k)$ diverge as $k\to 1^-$. However, we have the asymptotic behaviour
\begin{equation}
\label{eq:K1} 
K(k) = -\tfrac12 \ln(1-k) ( 1+ O(1-k) ) + \ln 4 + O(1-k)
\end{equation}
as $k\to 1^-$.

The complete elliptic integral of the second kind is defined by 
\begin{equation}
\label{eq:ellE} 
E(k) = \int_{0}^{\pi/2} \sqrt{1-k^{2}\sin^{2}\theta} \: d\theta = \int_{0}^1 \frac{ \sqrt{1-k^{2}x^{2}}}{\sqrt{1-x^{2}}  } \: dx .
\end{equation}
It is well-known that $E$ is a decreasing concave function in $k$.
We have the following power series representation for $E$:
\[ E(k) = \frac{\pi}{2} \sum_{n=0}^{\infty} \left( \frac{ (2n)! }{2^{2n} (n!)^{2} } \right)^{2} \frac{k^{2n}}{1-2n} .\]
It is clear that the integral expressions for $E(k)$ converge as $k\to 1^-$, and we in fact have the asymptotic behaviour
\begin{equation}
\label{eq:E1} 
E(k) = 1 + O(1-k)
\end{equation}
as $k\to 1^-$.

The difference between these elliptic integrals can be given concisely by
\begin{equation}
\label{eq:ellKE} 
K(k) - E(k) = \pi \sum_{n=1}^{\infty} \left( \frac{ (2n-1)!! }{(2n)!! } \right)^{2} k^{2n},
\end{equation}
where the double factorial is defined by
\[ n!! = \prod_{m=0}^{ \lceil n/2 \rceil -1 } (n-2m).\]

\section{Dimension \texorpdfstring{$2$}{2}} \label{sec:Dimension_2}

In dimension two, we will take advantage of complex coordinates. 
Moreover, we will employ an abuse of notation and write $D_{\sigma}$ for $D_{e^{i\sigma}}$ where $\sigma \in [0,2\pi)$.
First, there is the striking result that $I_{2}^{U}(x_{0})$ is constant.

\begin{theorem}[\cite{FFW23}, Theorem 1.2]
\label{thm:i2}
For any bounded convex domain $U\subset \mathbb{R}^{2}$ and any $x_{0}\in U$ we have
\[ I_{2}^{U}(x_{0}) = \frac{1}{\pi}  \operatorname{Area}(U).\]
\end{theorem}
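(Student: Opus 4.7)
The plan is to compute the area of $U$ in polar coordinates centered at $x_{0}$ and observe that the resulting integrand is exactly $\tfrac12 d_U(x_{0},e^{i\theta})^{2}$, so the identity is just a restatement of the area formula in polar form.

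More concretely, first I would use the convexity of $U$ to note that $U$ is star-shaped with respect to any $x_{0}\in U$: every ray $\{x_{0}+te^{i\theta}: t\geq 0\}$ meets $\partial U$ in a unique point at distance $d_U(x_{0},e^{i\theta})$ from $x_{0}$, and the segment from $x_{0}$ to that point lies in $\overline{U}$. This means that the map
\[ (r,\theta) \longmapsto x_{0}+re^{i\theta}, \qquad 0\leq \theta < 2\pi, \quad 0\leq r < d_U(x_{0},e^{i\theta}), \]
is a bijection from its domain onto $U$ (up to a set of measure zero).

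Next, I would apply the change of variables formula with Jacobian $r$ to obtain
\[ \operatorname{Area}(U) = \int_{0}^{2\pi}\int_{0}^{d_U(x_{0},e^{i\theta})} r\,dr\,d\theta = \frac12 \int_{0}^{2\pi} d_U(x_{0},e^{i\theta})^{2}\,d\theta. \]
Comparing with the definition
\[ I_{2}^{U}(x_{0}) = \frac{1}{2\pi}\int_{0}^{2\pi} d_U(x_{0},e^{i\theta})^{2}\,d\theta \]
gives $I_{2}^{U}(x_{0}) = \tfrac{1}{\pi}\operatorname{Area}(U)$, which is independent of $x_{0}$.

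There is essentially no obstacle here; the only mild subtlety is justifying that $d_U(x_{0},\cdot)$ is measurable (indeed continuous) as a function of $\theta$, which follows from convexity, and that the radial parametrization exhausts $U$ up to a null set, which again uses that convex $U$ is star-shaped from every interior point. Everything else is the standard polar-coordinate area formula.
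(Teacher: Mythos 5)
Your proof is correct, and it takes a genuinely different route from the paper's. The paper parameterizes $\partial U$ (assumed smooth) in polar form around $x_0$, applies Green's Theorem to $\int_\gamma \bar z\,dz$, uses an integration-by-parts trick to kill the cross term $\int d_U d_U'\,d\phi$, and then handles general convex domains by approximating with smooth ones via Theorem~\ref{thm:converge}. You instead invoke the star-shapedness of $U$ from any interior point and apply the polar change-of-variables formula directly, so that
\[ \operatorname{Area}(U) = \tfrac12 \int_0^{2\pi} d_U(x_0,e^{i\theta})^2\,d\theta, \]
which is the identity verbatim. Your route is more elementary: it needs no complex line integrals, no integration by parts, and crucially no smoothing-and-limiting step, since the change-of-variables formula applies to the radial parametrization of any bounded convex domain once one notes that $d_U(x_0,\cdot)$ is continuous. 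The paper's Green's-Theorem version does have the incidental benefit of exhibiting the result as a one-line consequence of the complex area formula $\operatorname{Area} = \frac{1}{2i}\int_\gamma \bar z\,dz$, a perspective the authors may prefer since they work throughout in complex coordinates, but for this particular statement your argument is cleaner and shorter.
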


This is a quick application of Green's Theorem, that we recall for the convenience of the reader.

\begin{proof}
Suppose first that $\partial U$ is smooth and that we parameterize $\partial U$ by fixing $z \in \overline{U}$ and setting $\gamma( \phi) = d_U(z,\phi ) e^{i\phi}$ for $0 \leq \phi \leq 2\pi$. Then $\gamma$ is a smooth function and 
\[ \gamma ' (\phi) = (d_U'(z,\phi ) + id_U(z,\phi ) ) e^{i\phi}, \]
where $d_U'$ denotes the derivative with respect to $\phi$.
As $d_U(z,\phi )$ is real-valued, we have
\begin{align*}
\int_{\gamma} \overline{z} \: dz &= \int_{0}^{2\pi} \overline {\left( d_U(z,\phi )e^{i\phi} \right)} ( d_U'(z,\phi ) + id_U(z,\phi) ) e^{i\phi} )\: d\phi \\
&= \int_{0}^{2\pi} ( d_U(z,\phi) d_U'(z,\phi )+ i(d_U(z,\phi ))^{2} ) \: d\phi.
\end{align*}
Integrating by parts, we have
\begin{align*} 
\int_{0}^{2\pi} d_U(z,\phi ) d_U'(z,\phi ) \: d\phi &= \left [ d_U(z,\phi )^{2} \right ] _{\phi= 0}^{2\pi} - \int_{0}^{2\pi}d_U(z,\phi ) d_U'(z,\phi )\: d\phi \\
&= 0 - \int_{0}^{2\pi} d_U(z,\phi ) d_U'(z,\phi ) \: d\phi.
\end{align*}
Thus 
\[ \int_{0}^{2\pi} d_U(z,\phi ) d_U'(z,\phi ) \: d\phi = 0\]
and we have
\[ \int_{\gamma} \overline{z} \: dz = i\int_{0}^{2\pi} d_U(z,\phi )^{2} \: d\phi = 2\pi i I_{2}^{U}(z).\]
By Green's Theorem, $\int_{\gamma} \overline{z} \: dz$ is nothing other than $2i$ multiplied by the area enclosed by $\gamma$, from which Theorem~\ref{thm:i2} follows in the smooth case. For the general case, we may approximate $\partial U$ by smooth curves, apply the above argument and then appeal to Theorem~\ref{thm:converge}.
\end{proof}

We next give perhaps the simplest example of computing the variance.

\begin{example}
\label{ex:disk}
Let $U$ be the unit disk $\mathbb{D}$. Then for any $z\in \mathbb{D}$ we have, by Theorem~\ref{thm:i2}, that $I_{2}^{\mathbb{D}}(z)  =1$. If $z=0$, then $I_{1}^{\mathbb{D}}(0)$ is clearly equal to $1$ too, and so $v_{\mathbb{D}}(0) = 0$. For any other $z\in \mathbb{D}\setminus \{ 0 \}$, by rotational symmetry we have $v_{\mathbb{D}}(z) = v_{\mathbb{D}}(|z|)$ and so we may assume that $z=x\in (0,1)$.

To find $d_{\mathbb{D}}(x,\phi)$, we need to solve $|x+re^{i\phi}| = 1$ for $r$ (see Figure~\ref{fig:2}).
This gives 
\[ r^{2} + 2xr\cos\phi + x^{2}-1 = 0.\]
Solving the quadratic and taking the appropriate square root, we obtain
\[ r = -x\cos\phi + (1-x^{2}\sin^{2}\phi)^{1/2}.\]
Therefore
\begin{align*}
I_{1}^{U}(x) &= \frac{1}{2\pi} \int_{0}^{2\pi} \left( -x\cos \phi + (1-x^{2}\sin^{2}\phi)^{1/2} \right) \: d\phi \\
&= \frac{1}{2\pi} \int_{0}^{2\pi} (1-x^{2}\sin^{2}\phi)^{1/2} \: d\phi \\
&= \frac{2}{\pi} \int_{0}^{\pi/2} (1-x^{2}\sin^{2}\phi)^{1/2} \: d\phi \\
&= \frac{2 E(x)}{\pi},
\end{align*}
by \eqref{eq:ellE}. We conclude that
\[ v_{\mathbb{D}}(z) = 1 - \frac{ 4E(|z|)^{2}}{\pi^{2}}.\]
\end{example}

\begin{figure}[ht]
\begin{center}
\begin{tikzpicture}
\draw[gray, ultra thick](0,0) circle (3);
\draw[dashed, gray, thick] (0,0) -- (3,0);
\filldraw [gray] (0,0) circle (2pt);
\node[text=gray] at (0,-0.5) {$0$};
\filldraw [gray] (1.5,0) circle (2pt);
\node[text=gray] at (1.5,-0.5) {$x$};
\draw[red, ultra thick] (1.5,0) -- (2.7,1.307669);
\node[text=red] at (2,1.1) {$r$};
\draw[red, ultra thick] (2.1,0) arc (0:30:1);
\node[text=red] at (2.5,0.4) {$\phi$};
\end{tikzpicture}
\caption{Distance to the boundary from $x>0$ in $\mathbb{D}$.}
\label{fig:2}
\end{center}
\end{figure}
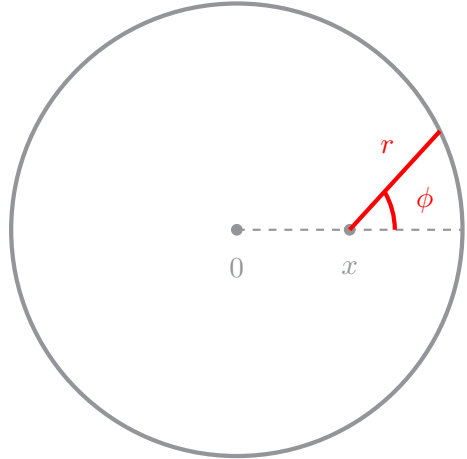

The variance can be computed for other disks by applying Theorem~\ref{thm:props}. More precisely, if $U = \{z : |z-w| = s \}$, then
\[ v_U(z) = s^{2} - \frac{ 4s^{2} E(|z-w|)^{2}}{\pi^{2}} . \]
As $E$ is a decreasing concave function on $(0,1)$, it follows that $v_{\mathbb{D}}$ is a convex function on $\mathbb{D}$.

Recall that to prove Theorem \ref{thm:dim2convex}, we need to show that $v_U$ is a convex function.
To prove this result, we need some preliminary lemmas. Let $z_{0} \in U$, $e^{i\sigma} \in S^1$ and $w\in \partial U$. Suppose that $y$ is in the direction $\phi$ relative to $z_{0}$, that is $y = z_{0} + se^{i\phi}$ for some $s>0$. Choose $\delta_{0}>0$ small enough that $z_{\delta} = z_{0} + \delta e^{i\sigma} \in U$ for $0\leq \delta \leq \delta_{0}$. If we fix $\delta \in (0,\delta_{0})$, let $\alpha$ be the direction of $y$ relative to $z_{\delta}$. While $\alpha$ does depend on $\delta$, we will typically suppress this in the notation. Moreover, if $\delta$ is small, then $\phi$ and $\alpha$ are close for any $y\in \partial U$.

Let $T$ be the triangle whose vertices are $z_{0}$, $z_{\delta}$, and $y$. Then the side lengths of $T$ are $r = d_{U}(z_{0},\phi), s = d_{U}(z_{\delta},\alpha)$, and $\delta$, and the internal angles opposite each respective side are $\pi - \alpha+\sigma$, $\phi - \sigma$, and $\alpha - \phi$ (see Figure~\ref{fig:3}).

\begin{figure}[ht]
\begin{center}
\begin{tikzpicture}
\begin{scope}[rotate=-45, scale=1.6]
\draw [gray, ultra thick] plot [smooth cycle] coordinates {(-1,-2.2) (1,-1.7) (1.7,-0.3) (1,1.8) (0,2.3) (-1,1.4) (-1.8, 0.4) (-2,-1.2)};
\end{scope}
\node[text=gray] at (-2.5,-1) {$U$};
\node[text=gray] at (0,-0.9) {$z_{0}$};
\node[text=red] at (1.5,0.1) {$z_{\delta}$};
\filldraw [gray] (0,-0.5) circle (2pt);
\draw[dashed, gray, ultra thick] (0,-0.5) -- (2.6,-0.5); \draw[dashed, gray, ultra thick] (0,-0.5) -- (3.1,1.5); 
\draw[gray, ultra thick] (0.9,-0.5) arc (0:30:1);
\node[text=gray] at (1.1,-0.2) {$\sigma$};
\draw[dashed, red, ultra thick] (0,-0.5) -- (0,2.7); 
\draw[red, >=triangle 45, <->] (-0.2,-0.4) -- (-0.2,2.6);
\node[text=red] at (-0.5,1.2) {$r$};
\draw[red, ultra thick] (0.5,-0.2) arc (30:90:0.6);
\node[text=red] at (0.6,0.4) {$\phi - \sigma$};
\filldraw [red] (0,2.7) circle (2pt);
\node[text=red] at (0,3.1) {$y$};
\draw[dashed, red, ultra thick] (0,2.7) -- (1.3,0.4); 
\draw[red, ultra thick] (1.7,0.6) arc (30:100:0.6);
\node[text=red] at (1.9,1.2) {$\alpha - \sigma$};
\draw[red, >=triangle 45, <->] (0.3,2.6) -- (1.2,1.1);
\node[text=red] at (1.3,1.7) {$s$};
\filldraw [red] (1.3,0.4) circle (2pt);
\end{tikzpicture}
\caption{The triangle $T$ with angles and side lengths.}
\label{fig:3}
\end{center}
\end{figure}
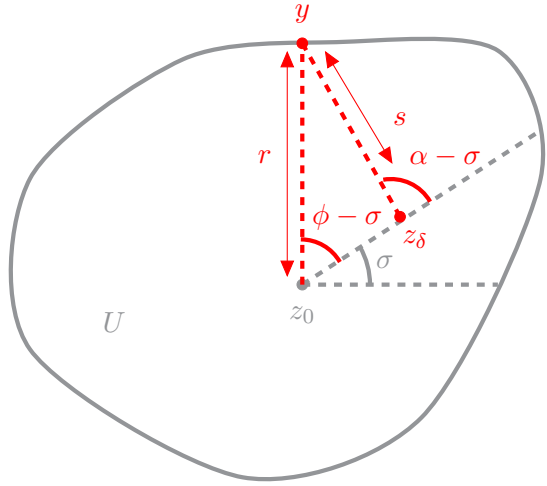

\begin{lemma}
\label{lem:d2var1}
With the notation as above,
\[ d_{U}(z_{\delta},\alpha) = d_{U}(z_{0},\phi) - \delta \cos (\phi - \sigma) + O(\delta^{2})\]
as $\delta \to 0$.
\end{lemma}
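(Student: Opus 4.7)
The plan is to apply the law of cosines in the triangle $T$. The angle at the vertex $z_0$ equals $\phi - \sigma$ and is opposite the side joining $z_\delta$ to $y$, whose length is $s := d_U(z_\delta, \alpha)$; the two sides adjacent to this angle have lengths $r := d_U(z_0, \phi)$ and $\delta$. Therefore
\[ s^2 = r^2 + \delta^2 - 2 r \delta \cos(\phi - \sigma). \]

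Next, I would factor out $r^2$ inside the square root, take the positive square root (both $r$ and $s$ are lengths), and apply the Taylor expansion $\sqrt{1 + u} = 1 + \tfrac{u}{2} + O(u^2)$ to the quantity $u := (\delta/r)^2 - 2(\delta/r)\cos(\phi - \sigma)$. Collecting the terms linear in $\delta$ then gives
\[ s = r\left( 1 - \tfrac{\delta}{r} \cos(\phi - \sigma) + O\!\left( \left(\tfrac{\delta}{r}\right)^2 \right)\right) = r - \delta \cos(\phi - \sigma) + O\!\left(\tfrac{\delta^2}{r}\right) \]
as $\delta \to 0$, which is the desired asymptotic expansion up to the quality of the error term.

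The only point that warrants any care is that the implicit constant in the $O(\delta^2)$ must be uniform in $\phi$, since this estimate will later be integrated over $\phi \in [0, 2\pi)$. This is immediate, however: because $z_0$ is an interior point of the bounded convex domain $U$, we have
\[ r = d_U(z_0, \phi) \geq d(z_0, \partial U) > 0 \]
for every $\phi$, so $1/r$ is bounded above by a constant depending only on $z_0$. Consequently the error term is $O(\delta^2)$ uniformly in $\phi$, and the lemma follows. This uniformity is the only subtlety in the argument, and it is essentially forced by the hypothesis $z_0 \in U$; no further obstacle is anticipated.
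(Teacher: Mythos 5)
Your argument is correct and follows essentially the same route as the paper: apply the law of cosines in $T$ to get $s^2 = r^2 + \delta^2 - 2r\delta\cos(\phi-\sigma)$, factor out $r^2$, and Taylor-expand the square root to first order in $\delta$. The only difference is that you explicitly record that the $O(\delta^2)$ constant is uniform in $\phi$ because $r = d_U(z_0,\phi) \geq d(z_0,\partial U) > 0$; the paper leaves this implicit, so this is a welcome clarification rather than a divergence.
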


\begin{proof}
By the law of cosines applied to $T$, we have
\[ s^{2} = r^{2} +\delta^{2} - 2\delta r \cos ( \phi - \sigma) .\]
It follows that
\begin{align*}
s &= r \left( 1 - \frac{ 2\delta \cos (\phi  -\sigma) }{r} + O(\delta^{2}) \right)^{1/2}\\
&=r \left( 1 - \frac{\delta \cos (\phi - \sigma) }{r} + O(\delta^{2}) \right) \\
&=r - \delta \cos (\phi - \sigma) + O(\delta^{2}),
\end{align*}
as required.
\end{proof}

\begin{lemma}
\label{lem:d2var2}
With the notation as above,
\[ \sin (\alpha  -\sigma) = \sin (\phi - \sigma) + \delta \left( \frac{ \sin (\phi - \sigma) \cos (\phi - \sigma) }{d_{U}(z_{0},\phi) } \right) + O(\delta^{2}) \]
as $\delta \to 0$.
\end{lemma}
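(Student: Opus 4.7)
The plan is to apply the law of sines to the triangle $T$, rather than the law of cosines used in Lemma \ref{lem:d2var1}, and then substitute the expansion already provided by Lemma \ref{lem:d2var1} to handle the $1/s$ term.

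First I would read off the angles of $T$: the angle at $z_0$ (opposite the side of length $s$) is $\phi-\sigma$, while the angle at $z_\delta$ (opposite the side of length $r$) is $\pi-\alpha+\sigma$. The law of sines then yields
\[ \frac{\sin(\pi - \alpha + \sigma)}{r} = \frac{\sin(\phi - \sigma)}{s}. \]
Since $\sin(\pi - \alpha + \sigma) = \sin(\alpha - \sigma)$, this rearranges to
\[ \sin(\alpha - \sigma) = \frac{r \sin(\phi - \sigma)}{s}. \]

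Next, I would invoke Lemma \ref{lem:d2var1} to write $s = r - \delta\cos(\phi-\sigma) + O(\delta^2)$, and expand the geometric series
\[ \frac{r}{s} = \frac{1}{1 - \delta\cos(\phi-\sigma)/r + O(\delta^2)} = 1 + \frac{\delta\cos(\phi-\sigma)}{r} + O(\delta^2) \]
as $\delta \to 0$. Multiplying through by $\sin(\phi-\sigma)$ and substituting $r = d_U(z_0,\phi)$ yields the claimed asymptotic expansion.

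This lemma should present no real obstacle; the only subtlety is recognizing that the law of sines gives the cleanest route, and being careful that the $O(\delta^2)$ error terms behave uniformly (which they do, since $r = d_U(z_0,\phi)$ is bounded below by a positive constant independent of $\phi$ as long as $\delta_0$ is chosen small enough to keep $z_\delta$ a definite distance from $\partial U$).
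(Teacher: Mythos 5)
Your proof is correct and is essentially the same as the paper's: both apply the law of sines to $T$ to get $\sin(\alpha-\sigma)/\sin(\phi-\sigma) = r/s$, then substitute the expansion of $s$ from Lemma~\ref{lem:d2var1} and expand $(1 - \delta\cos(\phi-\sigma)/r + O(\delta^2))^{-1}$ geometrically. The remark about uniformity of the error term is a sensible clarification, though the paper leaves it implicit.
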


\begin{proof}
By the law of sines applied to $T$, the fact that $\sin(\pi - x) = \sin x$, and Lemma~\ref{lem:d2var1}, we have
\[ \frac{\sin (\alpha - \sigma)}{\sin (\phi - \sigma) } = \frac{\sin (\pi - \alpha + \sigma)}{\sin (\phi - \sigma)} = \frac{ r}{s} = \frac{ r }{r - \delta \cos (\phi - \sigma) + O(\delta^{2}) } .\]
It follows that
\begin{align*}
\sin (\alpha - \sigma) &= \sin (\phi - \sigma) \left( 1 - \frac{\delta \cos (\phi  - \sigma) }{r} + O(\delta^{2}) \right)^{-1} \\
&= \sin (\phi - \sigma) \left( 1 + \frac{ \delta \cos (\phi - \sigma) }{r} + O(\delta^{2}) \right),
\end{align*}
as required.
\end{proof}

Viewing $\alpha$ as a function of $\phi$, we may estimate the derivative as follows.

\begin{lemma}
\label{lem:d2var3}
With the notation as above,
\[ \frac{ d \alpha}{d \phi} = 1 + \delta \left( \frac { (\cos (\phi - \sigma)) d_{U}(z_{0},\phi)  - (\sin (\phi -\sigma)) d_{U}'(z_{0},\phi) } { d_{U}(z_{0},\phi ) ^{2}} \right) + O(\delta ^{2}) \]
as $\delta  \to 0$.
\end{lemma}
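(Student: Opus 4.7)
The strategy is to derive the expansion by differentiating the asymptotic identity of Lemma~\ref{lem:d2var2} with respect to $\phi$, treating $z_{0}$, $\sigma$, and $\delta$ as fixed parameters. Writing $r = d_{U}(z_{0},\phi)$ and $r' = d_{U}'(z_{0},\phi)$ for brevity, an application of the chain rule to the left-hand side of the identity in Lemma~\ref{lem:d2var2} gives
\[ \cos(\alpha - \sigma)\,\frac{d\alpha}{d\phi} = \cos(\phi - \sigma) + \delta \frac{d}{d\phi}\!\left(\frac{\sin(\phi-\sigma)\cos(\phi-\sigma)}{r}\right) + O(\delta^{2}). \]

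Next, I would produce a first-order expansion of $\cos(\alpha - \sigma)$ as a function of $\phi$. Squaring Lemma~\ref{lem:d2var2}, using $\cos^{2} = 1 - \sin^{2}$, and taking the appropriate square root (noting that $\alpha - \sigma$ is close to $\phi - \sigma$, so the sign of the cosine is preserved) yields
\[ \cos(\alpha - \sigma) = \cos(\phi - \sigma) - \delta \frac{\sin^{2}(\phi - \sigma)}{r} + O(\delta^{2}). \]
Dividing through, expanding $1/\cos(\alpha - \sigma)$ to first order in $\delta$ via a geometric series, computing the elementary $\phi$-derivative inside the bracket explicitly (which produces $(\cos^{2}(\phi-\sigma) - \sin^{2}(\phi-\sigma))/r - \sin(\phi-\sigma)\cos(\phi-\sigma)\,r'/r^{2}$), and collecting terms reduces the proof to a routine bookkeeping exercise. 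The coefficient of $\delta$ simplifies via $\sin^{2} + \cos^{2} = 1$, leaving exactly the $\cos(\phi-\sigma)/r$ and $-\sin(\phi-\sigma)\,r'/r^{2}$ pieces of the claimed formula. A pleasant feature is that the $1/\cos(\phi-\sigma)$ factors appearing in intermediate steps cancel precisely, which is consistent with the right-hand side of Lemma~\ref{lem:d2var3} being regular across $\phi - \sigma = \pm\pi/2$.

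The main subtlety I expect is controlling the $O(\delta^{2})$ error under $\phi$-differentiation: one needs to know that its implicit constant is a bounded function of $\phi$ and that it also varies smoothly, so that differentiating it in $\phi$ again yields $O(\delta^{2})$. Since the error ultimately traces back to the law-of-cosines Taylor expansion in Lemma~\ref{lem:d2var1}, whose remainder depends on $r(\phi)$ in a smooth manner wherever $r$ is differentiable, the $O(\delta^{2})$ bound persists after differentiation in $\phi$. This differentiability of $r(\phi)$ is the only place where a regularity assumption on $\partial U$ (implicit in the appearance of $d_{U}'(z_{0},\phi)$ in the statement) genuinely enters the argument.
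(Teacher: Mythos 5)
Your derivation is correct but follows a different route from the paper. The paper returns to the triangle $T$ and applies the law of sines to the vertex angle $\alpha - \phi$ directly, obtaining $\sin(\alpha - \phi) = \delta \sin(\phi - \sigma)/s$, which after expanding $\arcsin$ and using Lemma~\ref{lem:d2var1} gives the explicit asymptotic $\alpha = \phi + \delta \sin(\phi - \sigma)/d_U(z_0,\phi) + O(\delta^2)$; differentiating this in $\phi$ then produces the formula immediately. You instead differentiate the identity of Lemma~\ref{lem:d2var2} implicitly and divide by an expansion of $\cos(\alpha - \sigma)$. Both yield the same answer, and both implicitly rely on the same fact you flag at the end, namely that the $O(\delta^2)$ remainder varies smoothly enough in $\phi$ for its derivative to remain $O(\delta^2)$. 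The real difference in cost is that your route introduces spurious factors of $1/\cos(\phi - \sigma)$ in intermediate steps: the geometric-series expansion of $\cos(\alpha - \sigma)$ and the subsequent division are only valid where $\cos(\phi - \sigma) \ne 0$. You correctly observe that these factors cancel in the final answer, but cancellation in the formal expansion is not by itself a proof that the $O(\delta^2)$ estimate is uniform near $\phi - \sigma = \pm\pi/2$; there one has $\cos(\alpha - \sigma) = O(\delta)$, so extracting the linear-in-$\delta$ coefficient of $d\alpha/d\phi$ from a ratio of two $O(\delta)$ quantities requires tracking second-order terms. This is fixable (e.g., by a continuity argument in $\phi$ once one knows the formula holds away from those two angles, with the uniformity of the error controlled via the law-of-sines form), but the paper's derivation avoids the issue entirely because its starting identity has no $\cos(\phi-\sigma)$ in a denominator. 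So: same result, a genuinely different and somewhat less economical derivation, with a small but real technical point to nail down at the two exceptional angles.
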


\begin{proof}
By the law of sines applied to $T$, we have
\[ \sin ( \alpha - \phi ) = \frac{ \delta \sin (\phi - \sigma) }{s} .\]
By Lemma~\ref{lem:d2var1}, it follows that
\begin{align*}
\alpha &= \phi + \arcsin  \left [ \frac{ \delta \sin (\phi - \sigma) }{s} \right ]\\
&= \phi + \arcsin \left [ \frac{ \delta \sin (\phi - \sigma) }{ r  - \delta \cos (\phi - \sigma) + O(\delta^{2}) } \right ]\\
&= \phi + \arcsin \left [ \frac{ \delta \sin (\phi - \sigma) }{r} \left( 1 - \frac{\delta \cos (\phi - \sigma)}{r} + O(\delta^{2}) \right)^{-1} \right ]\\
&= \phi + \arcsin \left [ \frac{ \delta \sin (\phi - \sigma) }{r} \left( 1 + \frac{\delta \cos (\phi - \sigma)}{r} + O(\delta^{2}) \right) \right ]\\
&= \phi +\frac{\delta \sin ( \phi - \sigma) }{r} + O(\delta^{2}).
\end{align*}
Recalling that $r = d_{U}(z_{0},\phi)$, we see by differentiating that
\[ \frac{ d \alpha}{d \phi} = 1+ \delta \left( \frac { (\cos (\phi - \sigma)) d_{U}(z_{0},\phi) - \sin  ( \phi - \sigma) d_{U} ' (z_{0},\phi) } {d_{U}(z_{0},\phi) ^{2}}  \right) + O(\delta^{2}), \] 
as required.
\end{proof}

With these lemmas in hand, we may now show that $v_U$ is convex.

\begin{proof}[Proof of Theorem~\ref{thm:dim2convex}]
By Theorem~\ref{thm:i2}, $I_{2}^{U}$ is constant. As $v_U(z) = I_{2}^{U}(z) - [I_{1}^{U}(z)]^{2}$, it follows that $v_U$ is strictly convex if and only if $I_{1}^{U}$ is strictly concave. It is enough to show that the directional second derivative of $I_{1}^{U}$ in every direction is strictly negative. This is what we will do.

Fix $z_{0} \in U$ and a direction $e^{i\sigma} \in S^1$. Choose $\delta >0$ small enough that $z_{\delta} = z_{0} + \delta e^{i\sigma } \in U$. For $y\in \partial U$, we set $\phi$ and $\alpha$ to be the directions of $y$ from $z_{0}$ and $z_{\delta}$ respectively.
Then by Lemma~\ref{lem:d2var1} and Lemma~\ref{lem:d2var3}, we have
\begin{align*}
I_{1}^{U}(z_{\delta}) &= \frac{1}{2\pi} \int_{0}^{2\pi} d_{U} (z_{\delta}, \alpha) \: d\alpha \\
&= \frac{1}{2\pi} \int_{0}^{2\pi} \left( d_{U}(z_{0},\phi) - \delta \cos (\phi - \sigma) + O(\delta^{2}) \right) \cdot \\
& \hskip0.5in \left( 1 + \delta \left( \frac { \cos (\phi - \sigma) d_{U}(z_{0},\phi)  - \sin (\phi  - \sigma)d_{U}'(z_{0},\phi) } { d_{U}(z_{0},\phi ) ^{2}} \right) + O(\delta ^{2}) \right)  \: d\phi\\
&= \frac{1}{2\pi} \int_{0}^{2\pi} \left( d_{U}(z_{0},\phi) - \frac{ \delta \sin (\phi - \phi)  d_{U}'(z_{0},\phi) }{d_{U}(z_{0},\phi)} + O(\delta^{2}) \right)  \: d\phi\\
&= I_{1}^{U}(z_{0}) - \frac{\delta}{2\pi} \int_{0}^{2\pi} \frac{ \sin (\phi - \sigma) d_{U}'(z_{0},\phi)}{d_{U}(z_{0},\phi)}  \: d\phi + O(\delta^{2}).
\end{align*}
It follows that the directional derivative of $I_{1}^{U}$ at $z_{0}$ in the direction $\sigma$ is
\begin{equation}
\label{eq:d2vareq1}
D_{\sigma} I_{1}^{U}(z_{0}) = - \frac{1}{2\pi}  \int_{0}^{2\pi} \frac{ \sin (\phi - \sigma) d_{U}'(z_{0},\phi) }{d_{U}(z_{0},\phi)} \: d\phi.
\end{equation}
For the second directional derivative, by \eqref{eq:d2vareq1}, we have
\[
D_{\sigma} I_{1}^{U}(z_{\delta}) =  - \frac{1}{2\pi}  \int_{0}^{2\pi} \frac{ \sin (\alpha - \sigma) d_{U}'(z_{\delta},\alpha) }{d_U(z_{\delta},\alpha)} \: d\alpha.
\]
By Lemma~\ref{lem:d2var1} and Lemma~\ref{lem:d2var3} applied to the inverse, we have
\begin{align*}
d_{U}'(z_{\delta},\alpha) &= \frac{ d\phi}{d\alpha} \cdot \frac{d}{d \phi} \left( d_{U}(z_{0},\phi) - \delta \cos (\phi - \sigma) + O(\delta^{2}) \right) \\
&= \left(  1 - \delta \left( \frac { \cos (\phi - \sigma) d_{U}(z_{0},\phi)  - \sin (\phi  - \sigma) d_{U}'(z_{0},\phi) } { d_{U}(z_{0},\phi ) ^{2}}\right)  + O(\delta ^{2}) \right)\cdot \\
&\hskip0.5in  \left( d_{U}'(z_{0},\phi) + \delta \sin (\phi - \sigma) + O(\delta^{2}) \right)\\
&= d_{U} ' (z_{0},\phi) + \delta \left [ \left( 1 + \left( \frac{ d_{U} ' (z_{0},\phi) }{d_{U}(z_{0},\phi)}\right)^{2} \right) \sin( \phi - \sigma) - \frac{d_{U}'(z_{0},\phi)}{ d_{U}(z_{0},\phi) } \cos (\phi - \sigma) \right ]+ O(\delta^{2})\\
&=: d_{U}'(z_{0},\phi) + \delta X + O(\delta^{2}).
\end{align*}
Then by Lemma~\ref{lem:d2var1}, Lemma~\ref{lem:d2var2} and Lemma~\ref{lem:d2var3}, and some omitted calculations, we have
\begin{align*}
D_{\sigma}I_{1}^{U}(z_{\delta}) &= -\frac{1}{2\pi} \int_{0}^{2\pi} \left( \sin (\phi - \sigma) + \delta \left( \frac{ \sin (\phi - \sigma) \cos (\phi - \sigma) }{d_{U}(z_{0},\phi) } \right) + O(\delta^{2}) \right) \cdot \\
& \hskip0.5in  \left( d_{U}'(z_{0},\phi)  + \delta X + O(\delta^{2}) \right)\cdot \left( d_{U}(z_{0},\phi) - \delta \cos (\phi - \sigma) + O(\delta^{2}) \right)^{-1} \cdot \\
&\hskip0.5in \left( 1 + \delta \left( \frac { \cos (\phi - \sigma) d_{U}(z_{0},\phi)  -
\sin (\phi - \sigma)d_{U}'(z_{0},\phi)} { d_{U}(z_{0},\phi ) ^{2}} \right) + O(\delta ^{2}) \right) \: d\phi \\
&= \frac{1}{2\pi} \int_{0}^{2\pi}  \frac{ \sin (\phi - \sigma)  d_{U}'(z_{0},\phi)}{d_{U}(z_{0},\phi)} \:d\phi - \frac{\delta}{2\pi} \int_{0}^{2\pi} \frac{ \sin^{2} (\phi - \sigma) }{d_{U}(z_{0},\phi)} \: d\phi + O(\delta^{2})\\
&= D_{\sigma}I_{1}^{U}(z_{0})  - \frac{\delta}{2\pi} \int_{0}^{2\pi} \frac{ \sin^{2} (\phi - \sigma) }{d_{U}(z_{0},\phi)} \: d\phi + O(\delta^{2}).
\end{align*}
It follows that 
\[ D_{\sigma}^{2} I_{1}^{U}(z_{0}) = -\frac{1}{2\pi} \int_{0}^{2\pi} \frac{ \sin^{2} (\phi - \sigma) }{d_{U}(z_{0},\phi)} \: d\phi .\]
As the integrand here is non-negative and not identically zero, we conclude that $D_{\sigma}^{2} I_{1}^{U}(z_{0})$ is strictly negative, which completes the proof.
\end{proof}

As $v_U$ is a strictly convex function, it has a unique minimum on $U$.

\begin{definition}
\label{def:variocentre}
Let $U \subset \mathbb{R}^{2}$ be a bounded convex domain. The unique minimum of $v_U$ is called the variocentre of $U$, denoted by $z_U$.
\end{definition}

By \eqref{eq:d2vareq1}, the unique minimum $z_U$ must be a critical point of $D_{\sigma}I_{1}^{U}$ for every direction $\sigma$, and this can be the only such point. This yields the following corollary.

\begin{corollary}
\label{cor:vc}
Let $U \subset \mathbb{R}^{2}$ be a bounded convex domain. Then $z_{0}$ is the variocentre if and only if
\[    \int_{0}^{2\pi} \frac{ \sin (\phi - \sigma) d_{U}'(z_{0},\phi) }{d_{U}(z_{0},\phi)} \: d\phi = 0 \]
for every $\sigma \in [0,2\pi)$, or equivalently,
\[   \int_{0}^{2\pi} \cos(\phi -\sigma) \ln d_{U}(z_{0},\phi) \: d\phi = 0\]
for all $\sigma \in [0,2\pi)$.
\end{corollary}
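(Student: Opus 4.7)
The plan is to read off both characterizations from the strict convexity established in Theorem \ref{thm:dim2convex} together with the formula \eqref{eq:d2vareq1} for the directional derivative of $I_1^U$, via two short reductions.

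For the first equivalence I would argue as follows. Since $I_2^U$ is constant by Theorem \ref{thm:i2}, expanding $v_U = I_2^U - (I_1^U)^2$ gives $D_\sigma v_U(z_0) = -2 I_1^U(z_0)\, D_\sigma I_1^U(z_0)$, and since $I_1^U(z_0) > 0$ for any $z_0 \in U$, the two directional derivatives vanish simultaneously. Because $v_U$ is strictly convex on the convex open set $U$ and attains its unique minimum at the variocentre $z_U$ (Definition \ref{def:variocentre}), an interior point $z_0$ equals $z_U$ if and only if $D_\sigma v_U(z_0) = 0$ for every direction $\sigma$. Substituting into \eqref{eq:d2vareq1} yields the first integral identity.

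For the second equivalence, the key observation is that
\[ \frac{d_U'(z_0,\phi)}{d_U(z_0,\phi)} = \frac{d}{d\phi}\bigl[\ln d_U(z_0,\phi)\bigr], \]
so the first integral rewrites as $\int_0^{2\pi} \sin(\phi-\sigma)\,\frac{d}{d\phi}\bigl[\ln d_U(z_0,\phi)\bigr]\,d\phi$. An integration by parts in $\phi$ then produces $-\int_0^{2\pi}\cos(\phi-\sigma)\ln d_U(z_0,\phi)\,d\phi$, with the boundary term vanishing because $\sin(\phi-\sigma)\ln d_U(z_0,\phi)$ is $2\pi$-periodic in $\phi$. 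Hence the two integrals agree up to sign for every $\sigma$, so one vanishes identically in $\sigma$ if and only if the other does.

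The only technical point is ensuring the integration by parts is legitimate. For any interior $z_0 \in U$ the function $\phi \mapsto d_U(z_0,\phi)$ is bounded away from $0$ and $\infty$ on a bounded convex domain, so $\ln d_U(z_0,\phi)$ is bounded; convexity of $U$ moreover implies that $d_U(z_0,\cdot)$ is Lipschitz, and this is already the regularity being used implicitly in the proof of Theorem \ref{thm:dim2convex}. I therefore do not anticipate a genuine obstacle; the corollary is essentially \eqref{eq:d2vareq1} read off at the unique critical point of $v_U$, reformulated via a single integration by parts.
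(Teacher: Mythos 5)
Your proposal is correct and matches the paper's argument: the first characterization is read off from \eqref{eq:d2vareq1} together with strict convexity (via $D_\sigma v_U = -2I_1^U\, D_\sigma I_1^U$), and the second follows by the same integration by parts, with the boundary term vanishing by $2\pi$-periodicity. Your extra remark about Lipschitz regularity of $d_U(z_0,\cdot)$ to justify the integration by parts is a reasonable elaboration that the paper leaves implicit.
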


\begin{proof}
The first conclusion follows from the observation above. For the second conclusion, integrating by parts yields
\begin{align*}
\int_{0}^{2\pi} \frac{ \sin (\phi - \sigma) d_{U}'(z_{0},\phi) }{d_{U}(z_{0},\phi)} \: d\phi &=
\left [ \sin (\phi - \sigma) \ln d_{U}(z_{0},\phi) \right ]_{\phi = 0}^{2\pi} - \int_{0}^{2\pi} \cos(\phi -\sigma) \ln d_{U}(z_{0},\phi) \: d\phi \\
&= - \int_{0}^{2\pi} \cos(\phi -\sigma) \ln d_{U}(z_{0},\phi) \: d\phi,
\end{align*}
which gives the result.
\end{proof}

By the formula for $v_U$, we can obtain the following representation for the directional derivative $D_{\sigma} v_U$.

\begin{theorem}
\label{thm:vrep}
Let $U$ be a bounded convex domain in $\mathbb{R}^{2}$ and $e^{i\sigma} \in S^1$. Then for $z_{0} \in U$, we have
\[ D_{\sigma} v_U(z_{0}) = \frac{ I_{1}^{U}(z_{0}) }{\pi} \int_{\sigma-\pi/2}^{\sigma + \pi/2} \cos ( \phi -\sigma ) \ln \left( \frac{ d_{U}(z_{0},\phi+\pi) }{d_{U}(z_{0},\phi) } \right) \: d\phi .\]
\end{theorem}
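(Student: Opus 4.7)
The plan is to reduce $D_\sigma v_U(z_0)$ to an integral already established in the proof of Theorem~\ref{thm:dim2convex} and then exploit a symmetry via the substitution $\phi \mapsto \phi + \pi$ to convert the integrand into a log-ratio supported on a half-period.

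First, since $v_U = I_2^U - (I_1^U)^2$ and $I_2^U$ is constant by Theorem~\ref{thm:i2}, I get
\[
D_\sigma v_U(z_0) = -2\, I_1^U(z_0)\, D_\sigma I_1^U(z_0).
\]
Then I substitute the expression \eqref{eq:d2vareq1} for $D_\sigma I_1^U(z_0)$ and integrate by parts exactly as in Corollary~\ref{cor:vc} (with $u=\sin(\phi-\sigma)$, $dv = d_U'(z_0,\phi)/d_U(z_0,\phi)\,d\phi$, so that $v = \ln d_U(z_0,\phi)$, and the boundary term vanishes by $2\pi$-periodicity of $\sin$). This yields
\[
D_\sigma v_U(z_0) = -\frac{I_1^U(z_0)}{\pi}\int_0^{2\pi}\cos(\phi-\sigma)\,\ln d_U(z_0,\phi)\,d\phi.
\]

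Next I use the fact that the integrand is $2\pi$-periodic in $\phi$ to shift the domain of integration to $[\sigma-\pi/2,\sigma+3\pi/2]$, and split it as $[\sigma-\pi/2,\sigma+\pi/2]\cup[\sigma+\pi/2,\sigma+3\pi/2]$. On the second piece I substitute $\psi = \phi-\pi$, which maps its range onto $[\sigma-\pi/2,\sigma+\pi/2]$, and use $\cos(\psi+\pi-\sigma) = -\cos(\psi-\sigma)$ to obtain
\[
\int_{\sigma+\pi/2}^{\sigma+3\pi/2}\cos(\phi-\sigma)\,\ln d_U(z_0,\phi)\,d\phi
= -\int_{\sigma-\pi/2}^{\sigma+\pi/2}\cos(\psi-\sigma)\,\ln d_U(z_0,\psi+\pi)\,d\psi.
\]
Combining the two pieces gives
\[
\int_0^{2\pi}\cos(\phi-\sigma)\,\ln d_U(z_0,\phi)\,d\phi
= \int_{\sigma-\pi/2}^{\sigma+\pi/2}\cos(\phi-\sigma)\,\ln\!\left(\frac{d_U(z_0,\phi)}{d_U(z_0,\phi+\pi)}\right)d\phi,
\]
and inserting the minus sign from above gives the claimed formula.

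There is no real obstacle here: the only things that need to be justified carefully are the vanishing of the boundary term in the integration by parts (immediate from periodicity) and the bookkeeping in splitting $[0,2\pi]$ and substituting $\phi\mapsto \phi-\pi$. The pleasant point of the manipulation is that it makes manifest how antipodal pairs of directions contribute: each diameter through $z_0$ enters $D_\sigma v_U$ only via the ratio of the two distances to $\partial U$ along it, weighted by $\cos(\phi-\sigma)$, which is the natural projection of that diameter onto the direction $\sigma$.
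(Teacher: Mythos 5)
Your proposal is correct and follows essentially the same route as the paper: use $D_\sigma v_U = -2 I_1^U D_\sigma I_1^U$ (via Theorem~\ref{thm:i2}), substitute \eqref{eq:d2vareq1}, integrate by parts as in Corollary~\ref{cor:vc}, then shift to $[\sigma-\pi/2,\sigma+3\pi/2]$ and fold the second half onto the first via $\phi\mapsto\phi-\pi$ and $\cos(\psi+\pi-\sigma)=-\cos(\psi-\sigma)$. The only microscopic gap is that the vanishing of the boundary term in the integration by parts uses $2\pi$-periodicity of $d_U(z_0,\cdot)$ as well as of $\sin$, not of $\sin$ alone, but this does not affect the argument.
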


\begin{proof}
As $v(z_{0}) = I_{2}(z_{0}) - [ I_{1}(z_{0}) ]^{2}$ and $I_{2}(z_{0})$ is constant by Theorem~\ref{thm:i2}, it follows that 
\[ D_{\sigma} v(z_{0}) = -2I_{1}(z_{0}) D_{\sigma} I_{1} (z_{0}).\]
Using \eqref{eq:d2vareq1}, integrating by parts as in the proof of Corollary~\ref{cor:vc}, and using $2\pi$-periodicity and the identity $\cos(\phi + \pi) = -\cos (\pi)$, this gives
\begin{align*}
D_{\sigma} v(z_{0}) &=  \frac{I_{1}(z_{0})}{\pi} \int_{0}^{2\pi}  \frac{ \sin (\phi -\sigma)d_{U}'(z_{0},\phi)}{d_{U}(z_{0},\phi)}   \:d\phi \\
&= - \frac{I_{1}(z_{0})}{\pi} \int_{0}^{2\pi} \cos(\phi - \sigma) \ln d_{U}(z_{0},\phi) \: d\phi \\
&=- \frac{I_{1}(z_{0})}{\pi} \int_{\sigma -\pi/2}^{\sigma + 3\pi /2} \cos(\phi - \sigma) \ln d_{U}(z_{0},\phi) \: d\phi \\
&=- \frac{I_{1}(z_{0})}{\pi} \int_{\sigma-\pi/2}^{\sigma + \pi/2} \cos (\phi - \sigma) \left( \ln d_{U} (z_{0},\phi) - \ln d_{U} (z_{0}, \phi +\pi) \right) \: d\phi,
\end{align*}
which gives the result.
\end{proof}

The point of the formulation in Theorem~\ref{thm:vrep} is that $\cos(\phi - \sigma)$ is non-negative for $\phi \in [\sigma -\pi/2 , \sigma + \pi/2]$ and so the parity of $D_{\sigma}v_U(z_{0})$ can, in principle, be determined by knowing how $d_{U}(z_{0},\phi)$ compares to $d_{U}(z_{0},\phi + \pi)$.
As a first illustration of this, if $U$ has rotational symmetry, then the variocentre is located at the centre of the rotation.

\begin{corollary}
\label{cor:rotation}
Let $U$ be a bounded convex domain in $\mathbb{R}^{2}$ and suppose that there exists $z_{0} \in U$ such that $d_{U} (z_{0}, \phi) = d_{U} (z_{0}, \phi + \pi)$ for all $\phi \in [0,2\pi)$. Then $z_{0}$ is the variocentre.
\end{corollary}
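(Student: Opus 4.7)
The plan is to apply the representation of $D_{\sigma} v_U(z_0)$ from Theorem~\ref{thm:vrep} directly. Under the hypothesis $d_U(z_0,\phi) = d_U(z_0,\phi+\pi)$ for every $\phi \in [0,2\pi)$, the logarithmic factor in the integrand,
\[ \ln \left( \frac{ d_{U}(z_{0},\phi+\pi) }{d_{U}(z_{0},\phi) } \right), \]
vanishes identically. Consequently the integral over $[\sigma - \pi/2, \sigma + \pi/2]$ is zero for every $\sigma$, and we obtain $D_\sigma v_U(z_0) = 0$ for all $\sigma \in [0,2\pi)$. Thus $z_0$ is a critical point of $v_U$ in every direction.

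To conclude that $z_0$ is the variocentre, I would invoke Theorem~\ref{thm:dim2convex}: since $v_U$ is strictly convex on $U$, it has a unique minimum, which coincides with its unique critical point. Hence the point $z_0$ identified above must equal the variocentre $z_U$ of Definition~\ref{def:variocentre}.

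There is essentially no obstacle here — the result is a direct corollary of Theorem~\ref{thm:vrep} combined with strict convexity. The only subtlety worth a sentence of care is checking that the vanishing of $D_\sigma v_U(z_0)$ in every direction genuinely characterises the minimiser of a strictly convex function on a convex domain; this is immediate, since a strictly convex function on a convex set has at most one minimiser, and any minimiser (interior or not) of a directionally differentiable convex function whose directional derivatives all vanish at an interior point must be that point. The proof should therefore be only a few lines long.
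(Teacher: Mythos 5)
Your proposal is correct and follows exactly the paper's own argument: the hypothesis kills the logarithm in Theorem~\ref{thm:vrep}, giving $D_\sigma v_U(z_0)=0$ for all $\sigma$, and strict convexity from Theorem~\ref{thm:dim2convex} then identifies $z_0$ as the unique minimiser. No differences worth noting.
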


\begin{proof}
By Theorem~\ref{thm:vrep}, it follows that $D_{\sigma}v_U(z_{0}) = 0$ for all $\sigma \in S^1$. Thus $z_{0}$ is the unique critical point of the strictly convex function $v_U$ and is the variocentre.
\end{proof}

Next, we want to compare the change in the variance when compared to the distance to the boundary. First, it is not always true that the variance and the distance to the boundary are inversely related, as the following example shows.

\begin{example}
\label{ex:2d}
Let $U\subset \mathbb{R}^{2}$ be the convex quadrilateral with vertices at $(\pm1, 1)$ and $(\pm 2, -1)$, as shown in Figure~\ref{fig:3.5}. Let $z_{0} = 0$ and $\sigma = -\pi/2$. Then for $-\pi/4 \leq \phi-\sigma  \leq  \pi/4$ we have $d_U(\phi + \pi ) = d_U(\phi)$. However, for $-\pi/2 < \phi-\sigma < \pi/4$ or $\pi /4 < \phi -\sigma<  \pi/2$ we have $d_U(\phi+\pi) < d_U(\phi)$. By Theorem~\ref{thm:vrep}, we see that
\[ D_{-\pi/2} v_U(z_{0}) < 0.\]
However, heading in the direction with angle $-\pi/2$ from $z_{0} = 0$, we also see that the distance to the boundary is strictly decreasing. For this example, by symmetry, the variocentre lies at $(0,y)$ for some $y<0$.
\end{example}

\begin{figure}[ht]
\begin{center}
\includegraphics[height=0.4\linewidth]{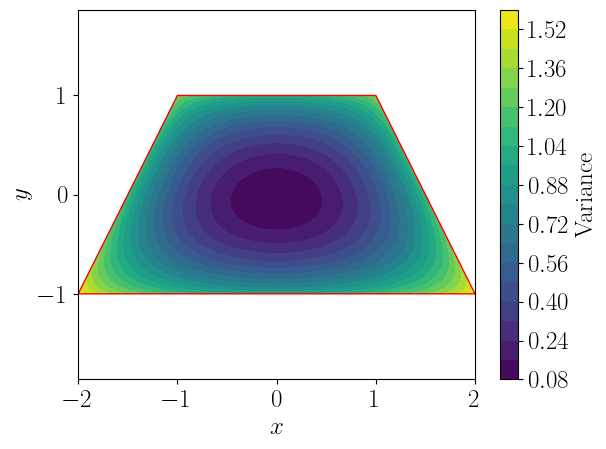}
\caption{Heatmap of the variance of the distance to the boundary, $v$, for the convex quadrilateral with vertices at $(\pm 1, 1)$ and $(\pm 2, -1)$.}
\label{fig:3.5}
\end{center}
\end{figure}

On the other hand, once we are close enough to the boundary of $U$, the variance and the distance to the boundary are inversely correlated, as the following result shows.

Note that in the statement of Theorem \ref{thm:vvskstatement}, $\sigma$ need not be unique: in fact, if $U$ is a disk and $z_{0}$ is its centre, then $e^{i\sigma}$ can take any value in $S^1$. The interpretation of this result is that near the boundary, the variance varies like $\ln d( \cdot, \partial U)^{-1}$. This is because $d(z_{\delta} , \partial U) = \delta r$ for $0<\delta <1$.

\begin{proof}[Proof of Theorem \ref{thm:vvskstatement}]
First, for $\delta \in (0,1)$, $w$ is the unique closest point to $z_{\delta}$ on $\partial U$. By Theorem~\ref{thm:vrep}, we need to estimate
\begin{equation}
\label{eq:vvskeq1} 
D_{\sigma} v_U(z_{\delta}) = \frac{I_{1}(z_{\delta})}{\pi} \int _{\sigma - \pi/2}^{\sigma +\pi /2} \cos(\phi - \sigma) 
\left( \ln \frac{1}{d_{U}(z_{\delta}, \phi) } + \ln d_{U}(z_{\delta},\phi+\pi) \right) \: d\phi.
\end{equation}
As $d_{U}(z_{\delta},\sigma) = \delta r$ and as $U$ is convex, the largest possible value that $d_{U}(z_{\delta}, \phi)$ can take, for $-\pi/2 < \phi -\sigma < \pi/2$, arises when the boundary of $U$ is a line meeting $\partial U$ perpendicular to the line segment joining $z_{0}$ to $w$, see Figure~\ref{fig:4}. By elementary trigonometry, this largest value is given by $\delta r / \cos(\phi - \sigma)$ for $-\pi/2 < \phi -\sigma< \pi /2$. 

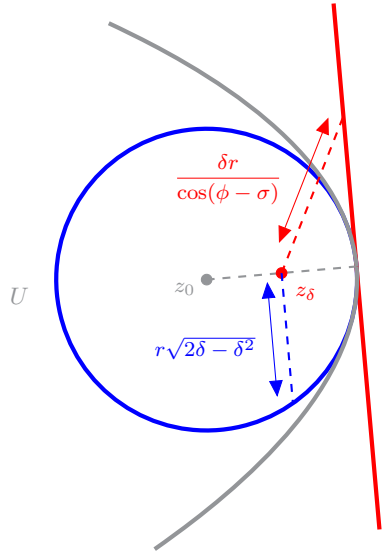
\begin{figure}[ht]
\begin{center}
\begin{tikzpicture}
\begin{scope}[rotate=5]
\node[text=gray] at (-2.5,0) {\footnotesize $U$};
\draw[blue, ultra thick](0,0) circle (2);
\draw[red, ultra thick] (2,-3.5) -- (2,3.5);
\draw[dashed, gray, thick] (0,0) -- (2,0);
\draw[gray, ultra thick, rotate=90, shift={(0,-2)}] (0,0) parabola (3.5,3);
\draw[gray, ultra thick, rotate=90, shift={(0,-2)}] (0,0) parabola (-3.5,3);
\filldraw[gray] (0,0) circle (2pt);
\node[text=gray] at (-0.3,-0.1) {
\scriptsize $z_{0}$};
\filldraw[red] (1,0) circle (2pt);
\node[text=red] at (1.3,-0.3) {
\scriptsize $z_{\delta}$};
\draw[dashed, blue, thick] (1,0) -- (1,-1.732);
\draw[blue, >=triangle 45, <->] (0.8,-0.1) -- (0.8,-1.632);
\node[text=blue] at (-0.1,-0.9) {
\scriptsize $r \sqrt{2 \delta - \delta^{2}}$};
\draw[dashed, red, thick] (1,0) -- (2,2);
\draw[red, >=triangle 45, <->] (1,0.4) -- (1.8,2);
\node[text=red] at (0.4,1.3) {
\scriptsize $\displaystyle\frac{\delta r}{\cos ( \phi - \sigma )}$};
\end{scope}
\end{tikzpicture}
\caption{Estimating distances to $\partial U$ from $z_{\delta}$.}
\label{fig:4}
\end{center}
\end{figure}

Since $U$ is bounded, this situation cannot occur for every $\phi - \sigma \in(-\pi/2,\pi/2)$, but this estimate is sufficient for our purposes. By changing variables with $t = \phi - \sigma$ and integrating by parts, we have
\begin{align*}
\int_{\sigma - \pi/2}^{\sigma + \pi/2} \cos ( \phi - \sigma) \ln \left( \frac{1}{d_{U}(z_{\delta}, \phi) } \right) \: d\phi 
&\geq \int_{\sigma - \pi/2}^{\sigma + \pi/2} \cos(\phi - \sigma) \ln \left( \frac{ \cos(\phi - \sigma) }{\delta r} \right) \: d\phi \\
&= \int_{-\pi/2}^{\pi/2} \left [ \cos (t) \ln( \cos (t) ) - \ln (\delta r) \cos (t) \right ] \: dt \\
&= \ln 4 -2 +2 \ln \frac{1}{\delta r}.
\end{align*}
Next, for $-\pi/2 < \phi -\sigma< \pi/2$, we note that $d_{U}(z_{\delta}, \phi + \pi)$ is at least the distance from $z_{\delta}$ to the boundary of the disk centred at $z_{0}$ of radius $r$ in the direction $\sigma + \pi/2$ (see again Figure~\ref{fig:4}). By elementary trigonometry, this gives the bound
\[ d_{U}(z_{\delta}, \phi + \pi) \geq \sqrt{ r^{2} - (1-\delta)^{2}r^{2}} = r\sqrt{2\delta - \delta^{2}}.\]
Therefore, we have
\begin{align*}
\int_{\sigma - \pi/2}^{\sigma+\pi/2} \cos ( \phi - \sigma) \ln d_{U} (z_{\delta},\phi + \pi) \:d\phi
&\geq \int_{\sigma - \pi/2}^{\sigma + \pi/2} \cos(\phi - \sigma) \ln ( r\sqrt{2\delta - \delta^{2}} ) \: d\phi \\
&= 2\ln r + \ln \delta + \ln ( 2 - \delta ).
\end{align*}
Combining these estimates with \eqref{eq:vvskeq1}, we obtain
\begin{align*}
D_{\sigma} v_U( z_{\delta} ) &\geq \frac{I_{1}(z_{\delta}) }{\pi} \left( \ln 4 - 2 +2\ln \left( \frac{1}{\delta r} \right) + 2\ln r +  \ln \delta + \ln ( 2 - \delta ) \right)\\
&= \frac{I_{1}(z_{\delta}) }{\pi} \left( \ln 4 -2 + \ln (2-\delta)+  \ln \frac{1}{\delta} \right), 
\end{align*}
from which the result follows.
\end{proof}

Theorem~\ref{thm:vvskstatement} shows that the variance increases as we move towards $\partial U$, once we are sufficiently close to $\partial U$. On the other hand, if $U$ is close to circular, then the variance increases as we move towards $\partial U$, once we are outside a compact set.

To put the final conclusion of Theorem~\ref{thm:ellipticstatement} into context, recall Example~\ref{ex:disk}, which shows that if $U$ is the unit disk then $D_{\arg z} v_U (z) > 0$ for every $z\in U \setminus \{0\}$. Of course, Theorem~\ref{thm:ellipticstatement} can be generalized to domains whose boundaries are close to circles other than the unit circle by an application of Theorem~\ref{thm:props}.

\begin{proof}[Proof of Theorem~\ref{thm:ellipticstatement}]
Without loss of generality, we may assume that $z = r$ lies on the positive real axis.
As $\partial U$ is contained in $\{z : 1 \leq |z| \leq 1+\epsilon \}$, we will estimate $D_{0} v_U(r)$ by replacing $U$ with the domain $V$ which is starlike about $r$ and whose boundary contains $\{ z : |z| =1, \operatorname{Re}(z) <r \}$ and $\{ z : |z| = 1+\epsilon, \operatorname{Re}(z) > r \}$ (see Figure~\ref{fig:5}). Note that $V$ is not convex for $\epsilon >0$.

\begin{figure}[ht]
\begin{center}
\begin{tikzpicture}
\draw[gray, ultra thick](0,0) circle (2);
\draw[gray, ultra thick](0,0) circle (3);
\draw[dashed, gray, thick] (0,0) -- (2,0);
\node[text=blue] at (0.6,0.9) {$V$};
\node[text=red] at (3.5,-0.4) {$1 + \epsilon$};
\filldraw[gray] (0,0) circle (2pt);
\node[text=gray] at (0,-0.4) {$0$};
\filldraw[gray] (1.1,0) circle (2pt);
\node[text=gray] at (1.1,-0.4) {$r$};
\filldraw[red] (2,0) circle (2pt);
\node[text=red] at (2.2,-0.4) {$1$};
\filldraw[red] (3,0) circle (2pt);
\node[text=red] at (3.5,-0.4) {$1 + \epsilon$};
\draw[dashed, blue, ultra thick] (1.1,1.832) arc (60:300:2.141);
\draw[dashed, blue, ultra thick] (1.1,1.832) -- (1.1,2.6);
\draw[dashed, blue, ultra thick] (2.9, 0) arc (0:68:2.9);
\draw[dashed, blue, ultra thick] (1.1,-1.832) -- (1.1,-2.6);
\draw[dashed, blue, ultra thick] (2.9, 0) arc (0:-68:2.9);
\end{tikzpicture}
\caption{The domain $V$.}
\label{fig:5}
\end{center}
\end{figure}
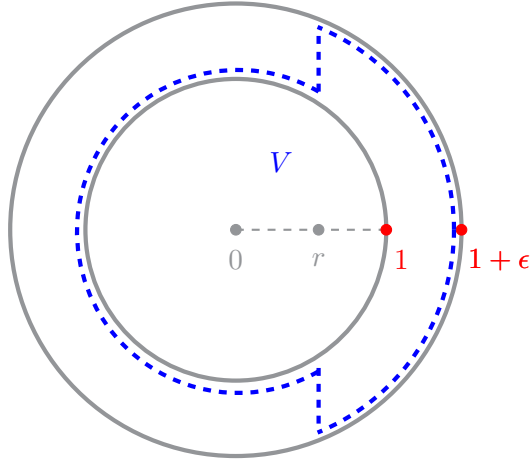

For $-\pi /2 < \phi < \pi /2$ we have
\[ d_U(r, \phi) \leq d_V(r,\phi),\]
whereas for $\pi/2 < \phi < 3\pi /2$ we have
\[ d_U(r,\phi) \geq d_V(r,\phi).\]
We can explicitly compute $d := d_V(r,\phi)$ as follows. First, for $\pi < \phi < 3\pi/2$ we have
\[ | r + de^{i\phi} | = 1.\]
This leads to the quadratic equation
\[ d^{2} + 2dr \cos \phi + r^{2}-1 = 0.\]
As $d>0$, taking the appropriate root of this equation yields
\[ d_V(r,\phi) = -r\cos \phi  + (1-r^{2}\sin^{2} \phi )^{1/2} \]
for $\pi/2 < \phi < 3\pi/2$. Next, for $-\pi /2 < \phi < \pi/2$ we have
\[ |r+de^{i\phi} | = 1+\epsilon .\]
Again solving the associated quadratic equation, this yields
\[ d_V(r,\phi) = -r\cos \phi + ( (1+\epsilon)^{2} - r^{2}\sin^{2} \phi )^{1/2}\]
for $-\pi/2 < \phi < \pi/2$. Then, by Theorem~\ref{thm:vrep}, we have
\begin{align*} 
D_{0} v_U(r) &= \frac{ I_{1}^{U}(r) }{\pi } \int_{-\pi/2}^{\pi/2} \cos \phi \ln \left( \frac{d_U(r , \phi + \pi)}{ d_U(r , \phi )} \right) \: d\phi \\
&\geq \frac{ I_{1}^{U}(r) }{\pi} \int_{-\pi/2}^{\pi/2} \cos \phi   \ln \left( \frac{d_V(r , \phi + \pi)}{ d_V(r , \phi )} \right) \: d\phi \\
&= \frac{ I_{1}^{U}(r) }{\pi} \int_{-\pi/2}^{\pi/2} \cos \phi \ln \left( \frac{ -r\cos (\phi + \pi) + \sqrt{1-r^{2}\sin^{2}(\phi + \pi ) } }{-r\cos \phi + \sqrt{ (1+\epsilon)^{2} - r^{2}\sin^{2}\phi} } \right) \: d\phi \\
&= \frac{I_{1}^{U}(r)}{\pi} \int_{-\pi/2} ^{\pi/2} \cos \phi \ln \left( \frac{ r\cos \phi  + \sqrt{ 1 - r^{2}\sin^{2} \phi}  }
{-r\cos \phi  + \sqrt{(1+\epsilon)^{2}-r^{2}\sin^{2} \phi }} \right)\: d\phi .
\end{align*}
To deal with this integral, we first compute the following integral:
\[ \int_{-\pi/2}^{\pi/2} \cos \phi \ln \left( r\cos \phi  + \sqrt{1-r^{2}\sin^{2} \phi} \right) \: d\phi = 
2 \int_{0}^{\pi/2} \cos \phi \ln \left( r\cos \phi  +\sqrt{1-r^{2}\sin^{2} \phi} \right) \: d\phi.\]
Making the substitution $u = r\sin\phi$ and then integrating by parts, we obtain
\begin{align*}
2 \int_{0}^{\pi/2} \cos \phi &\ln \left( r\cos \phi  + \sqrt{1-r^{2}\sin^{2} \phi} \right) \: d\phi 
= \frac{2}{r} \int_{0}^r \ln \left( \sqrt{r^{2}-u^{2}} + \sqrt{1-u^{2}} \right) \: du \\
&= \left [ \frac{2u}{r}  \ln \left( \sqrt{r^{2}-u^{2}} + \sqrt{1-u^{2}} \right) \right ]_{u=0}^r + \frac{2}{r} \int_{0}^r \frac{ u^{2}}{\sqrt{1-u^{2}}\sqrt{r^{2}-u^{2}} } \: du \\
&= \ln (1-r^{2}) + \frac{2}{r} \int_{0}^r \frac{ 1}{\sqrt{1-u^{2}}} \left( \frac{r^{2}}{\sqrt{r^{2}-u^{2}}} - \sqrt{r^{2}-u^{2}} \right) \: du\\
&=\ln(1-r^{2}) + 2rK(r) - \frac{2}{r} \int _{0}^r \left( \frac{\sqrt{1-u^{2}}}{\sqrt{r^{2}-u^{2}}} + \frac{r^{2}-1}{ \sqrt{1-u^{2}}\sqrt{r^{2}-u^{2}} } \right)\: du\\
&= \ln(1-r^{2}) +2rK(r) -\frac{2}{r} \left( E(r) + (r^{2}-1) K(r) \right) \\
&= \ln(1-r^{2}) + \frac{2}{r} \left( K(r) - E(r) \right).
\end{align*}
Here we have used \eqref{eq:ellK} and \eqref{eq:ellE} to write our integral using elliptic integrals. By an analogous computation that we omit, we find that
\begin{align*} 
\int_{-\pi/2}^{\pi/2} \cos \phi &\ln \left( -r\cos \phi + ((1+\epsilon)^{2} - r^{2}\sin^{2}\phi )^{1/2} \right) \: d\phi \\
&=  \ln ( (1+\epsilon)^{2}- r^{2} ) - \frac{ 2(1+\epsilon)}{r} \left( K(r/(1+\epsilon) ) - E(r/(1+\epsilon)) \right).
\end{align*}
Putting this altogether, we obtain
\begin{align}
\nonumber D_{0} v_U(r) & \geq \frac{ I_{1}^{U}(r)}{\pi} \left [ \ln \left( \frac{1-r^{2}}{(1+\epsilon)^{2}-r^{2}} \right) +  \frac{2}{r} \left( K(r) - E(r) \right) \right. \\
\label{eq:fre} & \hskip0.75in + \left. \frac{ 2(1+\epsilon)}{r} \left( K(r/(1+\epsilon) ) - E(r/(1+\epsilon)) \right) \right ].
\end{align}

Let us define $F(r,\epsilon)$ to be the function given by the expression in the square brackets in \eqref{eq:fre}. Then for $\epsilon >0$ and $r\in (0,1)$, $F$ is evidently a continuous function. Moreover, by \eqref{eq:ellKE}, we have
\begin{equation}
\label{eq:fre1} 
\lim_{\epsilon \to 0} F(r,\epsilon) = \frac{4}{r} \left( K(r) - E(r) \right)  = 4\pi \sum_{n=1}^{\infty} \left( \frac{ (2n-1)!!}{(2n)!!} \right)^{2} r^{2n-1}.
\end{equation}
Thus $F(r,\epsilon)$ extends continuously to $\epsilon = 0$ and we have $F(r,0) > 0$ for $r>0$. 

Now suppose $\epsilon >0$ is fixed. As $r\to 0^{+}$, we see that
\begin{equation}
\label{eq:f0e} 
F(r, \epsilon) \to -2 \ln (1+\epsilon) < 0.
\end{equation}
As $r\to 1^-$, we may use the asymptotic expansions \eqref{eq:K1} and \eqref{eq:E1} to give
\begin{align*}
F(r,\epsilon) &= \ln (1-r) + \ln (1+r) - \ln ( (1+\epsilon)^{2}- r^{2}) \\
& \hskip0.5in + \frac{2}{r} \left(-\tfrac12 \ln(1-r) \left( 1+O(1-r) \right) + \ln 4 -1 +O(1-r) \right) \\
& \hskip0.5in + \frac{ 2(1+\epsilon)}{r} \left( K(r/(1+\epsilon) ) - E(r/(1+\epsilon)) \right) \\
&= \left( 1 - \frac{1}{r} \right)\ln (1-r) + \ln 16 -2 - \ln(2\epsilon +\epsilon^{2}) + O(1-r) \\
& \hskip0.5in + \frac{ 2(1+\epsilon)}{r} \left( K(r/(1+\epsilon) ) - E(r/(1+\epsilon)) \right)\\
&\to \ln 16 - 2  - \ln(2\epsilon +\epsilon^{2}) + 2(1+\epsilon) \left( K(1/(1+\epsilon) ) - E(1/(1+\epsilon)) \right).
\end{align*}
In particular, there exists $\epsilon_{0}>0$ such that
\[ \lim_{r\to 1^-} F(r,\epsilon) > 0 \]
for all $\epsilon \in (0,\epsilon_{0})$. Moreover, combining this with \eqref{eq:f0e} and using the continuity of $F$, we see that given $\epsilon \in (0,\epsilon_{0})$, there exists $r(\epsilon)\in(0,1)$ such that $F(r,\epsilon) > 0$ for $r>r(\epsilon)$.

For the final claim, suppose that $r>0$ is fixed. Then using \eqref{eq:fre1}, we see that for $\epsilon >0$ sufficiently small, we have $F(r,\epsilon )>0$. We conclude that $r(\epsilon) \to 0$ as $\epsilon \to 0$.
\end{proof}

\section{Dimension \texorpdfstring{$3$}{3}} \label{sec:Dimension_3}

Moving into dimension $3$, we will use spherical coordinates $(\phi, \theta) \in [0,\pi] \times [0,2\pi)$ for $S^{2}$ so that $(\phi, \theta) = (0,0)$ corresponds to the direction $(1,0,0)$ in Euclidean coordinates. 
One immediate complication is that $I_{2}^{U}$ is not constant in dimensions greater than 2. For our first example, we recall the case of the unit ball from \cite{Straw23}. Unfortunately, the computation there was incorrect. Here we provide the correction.

\begin{example}
\label{ex:ball}
Let $U = \mathbb{B}^{3}$ be the unit ball in $^{3}$. It is clear that $I_{1}^{\mathbb{B}^{3}}(0) = 1 = I_{2}^{\mathbb{B}^{3}}(0)$ and so $v_{\mathbb{B}^{3}}(0) = 0$.
By rotational symmetry, $v_{\mathbb{B}^{3}}(x) = v_{\mathbb{B}^{3}} ( |x|)$ and so we may assume that $x=(r,0,0)$ for $0<r<1$.

Analogously to Example~\ref{ex:disk}, if $\sigma = (\phi, \theta) \in S^{2}$, then
\[ d_{\mathbb{B}^{3}}(x,\sigma) = -r\cos \phi + \sqrt{1-r^{2} \sin^{2} \phi } = -r\cos \phi + \sqrt{1-r^{2} + r^{2}\cos^{2}\phi } ,\]
noting in particular that this doesn't depend on $\theta$. Then, making the substitutions $u=r\cos \phi$ and $u = \sqrt{1-r^{2}}\tan t$, we have
\begin{align*}
I_{1}^{\mathbb{B}^{3}}(x) &= \frac{1}{4\pi} \int_{0}^{2\pi} \int_{0}^{\pi} d_{\mathbb{B}^{3}}(x,\sigma) \sin \phi \: d\phi \: d\theta \\
&= \frac{1}{2} \int_{0}^{\pi} \left( -r \cos \phi + \sqrt{ 1-r^{2} + r^{2} \cos ^{2}\phi } \right) \sin \phi \: d\phi \\
&= \frac{1}{2r} \int_{-r}^r \left( -u + \sqrt{1-r^{2} + u^{2}} \right) \: du \\
&= \frac{1}{r} \int_{0}^r \sqrt{1-r^{2} + u^{2} } \: du \\
&=\frac{1}{r} \int_{0}^{ \tan^{-1} (r/\sqrt{1-r^{2}})} \sqrt{ 1-r^{2} + (1-r^{2})\tan^{2}t} \cdot  \sqrt{1-r^{2}} \sec^{2}t \: dt \\
&= \frac{1-r^{2}}{r} \int_{0}^{\tan^{-1}(r/\sqrt{1-r^{2}} )} \sec^{3} t \: dt \\
&= \frac{1-r^{2}}{r} \left [ \tfrac12 \sec t \tan t  +  \tfrac12 \ln | \sec t  + \tan t | \right] _{t=0}^{ \tan^{-1}( r/\sqrt{1-r^{2}})} \\
&= \frac{1}{2} + \left( \frac{ 1-r^{2}}{4r} \right) \ln \left( \frac{1+r}{1-r} \right).
\end{align*}
Next, we have
\begin{align*}
I_{2}^{\mathbb{B}^{3}}(x) &= \frac{1}{4\pi} \int_{0}^{2\pi} \int_{0}^{\pi}  \left( -r \cos \phi + \sqrt{ 1-r^{2} + r^{2} \cos ^{2}\phi } \right)^{2} \sin \phi \: d\phi \:d\theta \\
&= \frac{1}{2} \int_{0}^{\pi} \left( 1-r^{2} + 2r^{2} \cos^{2}\phi - 2r\cos \phi \sqrt{ 1-r^{2} + r^{2}\cos^{2}\phi } \right) \sin \phi \: d\phi \\
&= \frac{1-r^{2}}{2} \int_{0}^{\pi} \sin \phi \: d\phi + r^{2} \int_{0}^{\pi} \cos^{2} \phi \sin \phi \: d\phi - r \int_{0}^{\pi} \cos \phi \sin \phi \sqrt{1-r^{2} + r^{2} \cos^{2}\phi} \: d\phi \\
&= (1-r^{2}) + \frac{2r^{2}}{3} + 0\\
&= 1-\frac{r^{2}}{3},
\end{align*}
where the third integral of the three above is zero as the integrand is antisymmetric about $\phi \to \pi - \phi$. We conclude that
\[ v_{\mathbb{B}^{3}}(x) = 1- \frac{r^{2}}{3} - \left [ \frac{1}{2} +  \left( \frac{ 1-r^{2}}{4r} \right) \ln \left( \frac{1+r}{1-r} \right) \right ]^{2}.\]
In particular, this confirms that $v_{\mathbb{B}^{3}}(0) = 0$ and we see that
\[ \lim_{r\to 1} v_{\mathbb{B}^{3}}( r,0,0) = \frac{5}{12}.\]
By computing the derivatives of the function above, which we leave as an exercise for the reader, it is evident that $v_{\mathbb{B}^{3}}(r,0,0)$ is a non-negative, increasing, convex function of $r$, and so $v_{\mathbb{B}^{3}}$ is a convex function on $\mathbb{B}^{3}$.
\end{example}

Turning now to regularity properties of the variance, we will compute its directional derivative. Given a bounded convex domain $U \subset ^{3}$, $x_{0} \in U$, and $\sigma \in S^{2}$, we may apply an isometry that moves $x_{0}$ to the origin and changes direction $\sigma$ to the direction given by $(\phi, \theta) = (0,0)$. This will make our spherical integrals straightforward to set up.

\begin{theorem}
\label{thm:d3pds}
Let $U\subset \mathbb{R}^{3}$ be a bounded convex domain and suppose that $x_{0}=0 \in U$. Set $\sigma_{0} = (\phi, \theta) = (0,0) \in S^{2}$. Then
\[ D_{\sigma_{0}} v(0)  = \frac{1}{4\pi} \int_{0}^{2\pi} \int_{0}^{\pi} (\sin ^{2} \phi ) (d_U(0,\phi , \theta))_{\phi} \left( \frac{2I_{1}^{U}(0) }{d_U(0,\phi, \theta)} - 1\right) \: d\phi \: d\theta ,\]
where $(d_U(0,\phi , \theta))_{\phi}$ denotes the partial derivative of the distance to the boundary function with respect to $\phi$.
\end{theorem}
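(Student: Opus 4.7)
The plan is to mirror the 2D calculation from the proof of Theorem~\ref{thm:dim2convex} by reducing the 3D problem to planar triangles on meridional cross-sections. First I would write $v = I_2^U - (I_1^U)^2$ and use Theorem~\ref{thm:i2} is unavailable in dimension 3, so both derivatives must be computed:
\[ D_{\sigma_0} v(0) = D_{\sigma_0} I_2^U(0) - 2 I_1^U(0) D_{\sigma_0} I_1^U(0). \]

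Set $x_\delta = \delta \sigma_0 = (\delta,0,0)$ and parameterize the sphere from $x_\delta$ by $(\alpha,\theta)$ where $\alpha$ is the polar angle from the $x$-axis. The key geometric observation is that for a boundary point $y$ seen from $x_0=0$ in direction $(\phi,\theta)$, the three points $x_0,x_\delta,y$ all lie in the meridional half-plane containing the $x$-axis and $y$. Hence the azimuthal coordinate $\theta$ is the same whether measured from $x_0$ or from $x_\delta$, and within that half-plane we obtain a planar triangle with sides $r = d_U(0,\phi,\theta)$, $\delta$, $s = d_U(x_\delta,\alpha,\theta)$ and angles $\phi$, $\pi-\alpha$, $\alpha-\phi$ respectively. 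Applying the law of cosines and law of sines exactly as in Lemmas~\ref{lem:d2var1}, \ref{lem:d2var2}, and \ref{lem:d2var3} (with $\phi$ here playing the role of $\phi-\sigma$ there) gives
\[ s = r - \delta\cos\phi + O(\delta^2),\quad \sin\alpha = \sin\phi + \delta\tfrac{\sin\phi\cos\phi}{r} + O(\delta^2), \]
\[ \frac{d\alpha}{d\phi} = 1 + \delta\,\frac{r\cos\phi - r_\phi\sin\phi}{r^2} + O(\delta^2), \]
where $r_\phi = (d_U(0,\phi,\theta))_\phi$.

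Next I would change variables in
\[ I_k^U(x_\delta) = \frac{1}{4\pi}\int_0^{2\pi}\!\!\int_0^\pi d_U(x_\delta,\alpha,\theta)^k \sin\alpha\,d\alpha\,d\theta \]
from $(\alpha,\theta)$ to $(\phi,\theta)$ using the Jacobian $d\alpha/d\phi$ (valid for $\delta$ small since the map is a small perturbation of the identity). Multiplying out the three expansions above yields
\[ s^k \sin\alpha\,\frac{d\alpha}{d\phi} = r^k\sin\phi + \delta\sin\phi\bigl[(2-k)r^{k-1}\cos\phi - r^{k-2}\sin\phi\cdot r_\phi\bigr] + O(\delta^2). \]
Subtracting $I_k^U(0)$, dividing by $\delta$, and letting $\delta\to 0$ gives a closed form for $D_{\sigma_0} I_k^U(0)$. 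For $k=2$ the $(2-k)$ term vanishes, while for $k=1$ the residual $\int_0^\pi \sin\phi\cos\phi\,d\phi = 0$ kills the analogous piece. One is left with
\[ D_{\sigma_0} I_2^U(0) = -\frac{1}{4\pi}\!\int_0^{2\pi}\!\!\int_0^\pi \sin^2\phi\,r_\phi\,d\phi\,d\theta,\quad D_{\sigma_0} I_1^U(0) = -\frac{1}{4\pi}\!\int_0^{2\pi}\!\!\int_0^\pi \frac{\sin^2\phi\,r_\phi}{r}\,d\phi\,d\theta, \]
and assembling $D_{\sigma_0} v(0)$ gives the claimed formula.

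The main obstacle, aside from the bookkeeping, is justifying the meridional reduction cleanly: namely, the observation that translating $x_0$ along $\sigma_0$ preserves $\theta$, so the 2D lemmas apply verbatim in each meridional slice, and the spherical Jacobian degenerates to the single factor $d\alpha/d\phi$. One must also handle the regularity of $d_U$ with respect to $\phi$, which can be dispensed with by first proving the result for convex $U$ with smooth boundary and then passing to the limit via Theorem~\ref{thm:converge}.
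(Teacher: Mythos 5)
Your proposal is correct and takes essentially the same approach as the paper: both rely on the meridional reduction (the fact that translating along $\sigma_0$ preserves $\theta$, so Lemmas~\ref{lem:d2var1}--\ref{lem:d2var3} apply in each half-plane), the change of variables $\alpha \mapsto \phi$ with Jacobian $d\alpha/d\phi$, and the first-order expansion in $\delta$. The only difference is organizational — you expand $s^k\sin\alpha\,(d\alpha/d\phi)$ once for general $k$ and combine via $D_{\sigma_0}v = D_{\sigma_0}I_2 - 2I_1\,D_{\sigma_0}I_1$, whereas the paper expands $I_1(x_\delta)$ and $I_2(x_\delta)$ separately and then forms $v(x_\delta)$ before taking the limit.
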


\begin{proof}
Let $\delta < d(x_{0} , \partial U)$. Set $x_{\delta} = x_{0} + \delta \sigma_{0} \in U$. Given $y\in \partial U$, we will write spherical coordinates $(\phi , \theta)$ to denote the direction from $x_{0}$ to $y$, and spherical coordinates $(\alpha , \theta)$ to denote the direction from $x_{\delta}$ to $y$. Note that the $\theta$ coordinate does not change, due to the way we have set our construction up.
Then
\begin{align*}
I_{1}^{U}(x_{\delta}) &= \frac{1}{4\pi} \int_{S^{2}} d_U(x_{\delta}, \sigma) \: d\sigma\\
&= \frac{1}{4\pi} \int_{0}^{2\pi} \int_{0}^{\pi} d_U(x_{\delta}, \alpha, \theta) \sin \alpha \: d\alpha \: d\theta.
\end{align*}
Let $T$ be the triangle with vertices $x_{0},x_{\delta}$, and a point $y\in \partial U$. Then we may use Lemma~\ref{lem:d2var1}, Lemma~\ref{lem:d2var2}, and Lemma~\ref{lem:d2var3} just as in the two-dimensional case to obtain
\begin{align*}
I_{1}^{U}(x_{\delta}) &= \frac{1}{4\pi} \int_{0}^{2\pi} \int _{0}^{\pi } \left( d_U(x_{0} , \phi, \theta) - \delta \cos \phi + O(\delta^{2}) \right)
\cdot \left( \sin \phi + \frac{ \delta \sin \phi \cos \phi }{d_U(x_{0} , \phi , \theta ) } + O(\delta^{2}) \right)\\
&\hskip1in \cdot \left(  1 + \delta \left( \frac { (\cos \phi) d_U(x_{0} , \phi , \theta )  - (\sin \phi ) (d_U(x_{0},\phi, \theta))_{\phi} } { d_U(x_{0},\phi , \theta) ^{2}} \right) + O(\delta ^{2}) \right) \: d \phi \: d\theta \\
&= I_{1}^{U}(x_{0}) + \frac{\delta}{4\pi } \int_{0}^{2\pi} \int_{0}^{\pi} \left( \sin \phi \cos \phi -  \frac{ (\sin^{2} \phi )(d_U(x_{0},\phi, \theta))_{\phi}}{d_U(x_{0},\phi,\theta)} \right) \: d\phi \: d\theta + O(\delta^{2})\\
&= I_{1}^{U}(x_{0}) - \frac{\delta}{4\pi} \int_{0}^{2\pi} \int _{0}^{\pi} \frac{ (\sin^{2} \phi ) (d_U(x_{0},\phi, \theta))_{\phi} }{d_U(x_{0},\phi,\theta)} \: d\phi \: d\theta + O(\delta^{2}).
\end{align*}

In a similar manner, we may use  Lemma~\ref{lem:d2var1}, Lemma~\ref{lem:d2var2}, and Lemma~\ref{lem:d2var3} to compute $I_{2}^{U}(x_{\delta})$, omitting some of the details:
\begin{align*}
I_{2}^{U}(x_{\delta}) &= \frac{1}{4\pi } \int_{0}^{2\pi} \int _{0}^{\pi } d_U(x_{\delta}, \alpha , \theta)^{2} \: \sin \alpha \: d\alpha \: d\theta \\
&= \frac{1}{4\pi} \int_{0}^{2\pi} \int _{0}^{\pi } \left( d_U(x_{0} , \phi, \theta) - \delta \cos \phi + O(\delta^{2}) \right)^{2}
\cdot \left( \sin \phi + \frac{ \delta \sin \phi \cos \phi }{d_U(x_{0} , \phi , \theta ) } + O(\delta^{2}) \right)\\
&\hskip1in \cdot \left(  1 + \delta \left( \frac { (\cos \phi) d_U(x_{0} , \phi , \theta )  - (\sin \phi ) (d_U(x_{0},\phi, \theta))_{\phi} } { d_U(x_{0},\phi , \theta) ^{2}} \right) + O(\delta ^{2}) \right) \: d \phi \: d\theta \\
&= I_{2}^{U}(x_{0}) - \frac{\delta}{4\pi} \int_{0}^{2\pi} \int_{0}^{\pi} (\sin ^{2} \phi )  (d_U(x_{0},\phi, \theta))_{\phi} \: d\phi \: d\theta + O(\delta^{2}).
\end{align*}
Using the shorthand $d_{U} \equiv d_{U}(x_{0}, \phi, \theta)$ and $d_{U \phi} \equiv \left(d_{u}(x_{0}, \phi, \theta)\right)_{\phi}$, the variance is then given by
\begin{align*}
v_U(x_{\delta}) &= I_{2}^{U}(x_{\delta}) - [I_{1}^{U}(x_{\delta})]^{2} \\
&= I_{2}^{U}(x_{0}) - \frac{\delta}{4\pi} \int_{0}^{2\pi} \int_{0}^{\pi} (\sin ^{2} \phi ) d_{U \phi} \: d\phi \: d\theta + O(\delta^{2}) \\
&\hskip0.5in - \left [ I_{1}^{U}(x_{0}) - \frac{\delta}{4\pi} \int_{0}^{2\pi} \int _{0}^{\pi} \frac{ (\sin^{2} \phi ) d_{U \phi}}{d_{U}} \: d\phi \: d\theta + O(\delta^{2}) \right ]^{2}\\
&= v_U(x_{0}) + \frac{\delta }{4\pi } \int_{0}^{2\pi }\int_{0}^{\pi} \left [ -(\sin ^{2} \phi ) d_{U \phi} + \frac{2 I_{1}^{U}(x_{0}) (\sin^{2} \phi ) d_{U \phi} }{d_{U}} \right ] \: d\phi \: d\theta + O(\delta^{2}) \\
&= v_U(x_{0}) + \frac{\delta }{4\pi } \int_{0}^{2\pi }\int_{0}^{\pi} (\sin^{2}\phi )(d_{U \phi} \left( \frac{ 2I_{1}^{U}(x_{0})}{d_{U}} -1 \right) \: d\phi \: d\theta + O(\delta^{2}).
\end{align*}
The result then follows, since
\[ D_{\sigma_{0}} v_U(x_{0}) = \lim_{ \delta \to 0} \frac{ v_U(x_{\delta}) - v_U(x_{0})}{\delta}.\]
\end{proof}

Using integration by parts, we may represent the directional derivative as follows.

\begin{theorem}
\label{thm:vrep3d}
Let $U\subset \mathbb{R}^{3}$ be a bounded convex domain and suppose that $x_{0}=0 \in U$. Set $\sigma_{0} = (\phi, \theta) = (0,0) \in S^{2}$. Then
\[ D_{\sigma_{0}} v(0) = \frac{1}{4\pi} \int_{0}^{2\pi} \int_{0}^{\pi/2} \sin (2\phi ) G(\phi , \theta) \: d\phi \: d\theta,\]
where
\begin{equation}
\label{eq:gpt} 
G(\phi , \theta) = 2I_{1}^{U}(0) \ln \left( \frac{ d_U(0,\phi-\pi , \theta)}{d_U(0,\phi , \theta) } \right) + \left( d_U(0,\phi,\theta) - d_U(0,\phi - \pi , \theta) \right) .\end{equation}
\end{theorem}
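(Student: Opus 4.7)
The plan is to derive Theorem~\ref{thm:vrep3d} from Theorem~\ref{thm:d3pds} in two stages: an integration by parts in $\phi$ to eliminate the partial derivative $(d_U)_\phi$, followed by a change of variables on the half $[\pi/2,\pi]$ of the $\phi$-range that folds the integral onto $[0,\pi/2]$ and brings in the distance to the boundary in the antipodal direction.

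First, I would split the integrand of Theorem~\ref{thm:d3pds} as
\[ 2 I_1^{U}(0)\,\sin^2\phi\,(\ln d_U)_\phi \;-\; \sin^2\phi\,(d_U)_\phi, \]
using $(d_U)_\phi/d_U = (\ln d_U)_\phi$, and integrate each term by parts in $\phi$. The boundary contributions at $\phi = 0$ and $\phi = \pi$ both vanish since $\sin^2\phi$ does, and $(\sin^2\phi)_\phi = \sin(2\phi)$, which gives
\[ D_{\sigma_0} v(0) = \frac{1}{4\pi}\int_0^{2\pi}\!\!\int_0^{\pi} \sin(2\phi)\bigl[-2 I_1^U(0)\ln d_U(0,\phi,\theta) + d_U(0,\phi,\theta)\bigr]\,d\phi\,d\theta. \]

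Second, I would split this inner integral at $\phi = \pi/2$ and, on the piece $[\pi/2,\pi]$, substitute $\phi = \pi - \psi$ together with $\theta = \theta' - \pi$, restoring the $\theta'$-range to $[0,2\pi]$ by $2\pi$-periodicity. The pivotal identity is that in the paper's spherical parameterization (in which $(\phi,\theta)=(0,0)\leftrightarrow(1,0,0)$) the direction $(\pi-\psi,\theta'-\pi)$ has Cartesian form $-(\cos\psi,\sin\psi\cos\theta',\sin\psi\sin\theta')$, i.e., the antipode of $(\psi,\theta')$; by $\sin(\psi-\pi) = -\sin\psi$ and $\cos(\psi-\pi) = -\cos\psi$ this antipode is precisely what the formula \eqref{eq:gpt} denotes $(\psi-\pi,\theta')$, so $d_U(0,\pi-\psi,\theta'-\pi) = d_U(0,\psi-\pi,\theta')$. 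Combined with $\sin(2(\pi-\psi)) = -\sin(2\psi)$, the Jacobian $|d\phi\,d\theta| = |d\psi\,d\theta'|$, and adding the two halves, the $[-2I_1^U(0)\ln d_U + d_U]$ contributions at $(\phi,\theta)$ and at $(\phi-\pi,\theta)$ regroup into exactly $\sin(2\phi)\,G(\phi,\theta)$.

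The only mild subtlety is the antipode bookkeeping under two equivalent parameterizations of $S^2$; once one observes that $(\phi-\pi,\theta)$ in this paper's convention names the antipode of $(\phi,\theta)$, the substitution and sign tracking are mechanical, and no real analytic obstacle arises.
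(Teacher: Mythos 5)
Your proposal is correct and follows the same route as the paper: integrate Theorem~\ref{thm:d3pds} by parts against $\sin^2\phi$ (the paper takes $s=\sin^2\phi$, $t=2I_1^U(0)\ln d_U - d_U$ in one go, which is equivalent to your two-term split), then fold $[\pi/2,\pi]$ onto $[0,\pi/2]$. In fact you handle one detail more carefully than the paper: the paper passes from $d_U(0,\phi,\theta)$ on $[\pi/2,\pi]$ to $d_U(0,\phi-\pi,\theta)$ on $[0,\pi/2]$ with only the remark ``$\sin 2(\pi-\phi)=-\sin 2\phi$,'' but the pure substitution $\phi\mapsto\pi-\psi$ yields the reflected direction $(\pi-\psi,\theta)$, not the antipode $(\psi-\pi,\theta)$; the simultaneous shift $\theta\mapsto\theta'-\pi$ (harmless by $2\pi$-periodicity of the $\theta$-integral) is what turns the reflected direction into the antipode, exactly as you explain with the Cartesian bookkeeping. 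Your version of that step is the honest one.
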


\begin{proof}
Integrating the formula for $D_{\sigma_{0}}v(0)$ in Theorem~\ref{thm:d3pds} by parts with respect to $\phi$ by taking $s = \sin^{2} \phi$ and 
\[ dt =  (d_U(0,\phi , \theta))_{\phi} \left( \frac{2I_{1}^{U}(0) }{d_U(0,\phi, \theta)} - 1\right) \: d\phi,\]
we obtain $ds = \sin 2\phi \: d\phi$ and
\[ t = 2I_{1}^{U}(0) \ln d_U(0,\phi , \theta) - d_U(0,\phi , \theta).\]
Therefore
\begin{align*}
 \int_{0}^{\pi} (\sin ^{2} \phi ) & (d_U(0,\phi , \theta))_{\phi} \left( \frac{2I_{1}^{U}(0) }{d_U(0,\phi, \theta)} - 1\right) \: d\phi \\
&= \left [ (\sin^{2} \phi) v \right ]_{\phi = 0}^{\pi}  - \int_{0}^{\pi}  \sin 2\phi \left( 2I_{1}^{U}(0) \ln d_U(0,\phi , \theta) - d_U(0,\phi , \theta) \right) \: d\phi \\
&=   - \int_{0}^{\pi}  \sin 2\phi \left( 2I_{1}^{U}(0) \ln d_U(0,\phi , \theta) - d_U(0,\phi , \theta) \right) \: d\phi.
\end{align*}
As $\sin 2(\pi - \phi) = -\sin 2\phi$, we have
\begin{align*}
 -\int_{0}^{\pi}  \sin 2\phi & \left( 2I_{1}^{U}(0) \ln d_U(0,\phi , \theta) - d_U(0,\phi , \theta) \right) \: d\phi \\
&= - \int_{0}^{\pi/2}  \sin 2\phi \left( 2I_{1}^{U}(0) \ln d_U(0,\phi , \theta) - d_U(0,\phi , \theta) \right) \: d\phi  \\
& \hskip1in- \int_{\pi/2}^{\pi}\sin 2\phi \left( 2I_{1}^{U}(0) \ln d_U(0,\phi , \theta) - d_U(0,\phi , \theta) \right) \: d\phi  \\
&= - \int_{0}^{\pi/2}  \sin 2\phi \left( 2I_{1}^{U}(0) \ln d_U(0,\phi , \theta) - d_U(0,\phi , \theta) \right) \: d\phi \\
& \hskip1in +\int_{0}^{\pi/2} \sin 2\phi  \left( 2I_{1}^{U}(0) \ln d_U(0,\phi -\pi, \theta) - d_U(0,\phi -\pi , \theta) \right) \: d\phi,
\end{align*}
from which the theorem follows.
\end{proof}

It follows from Theorem~\ref{thm:vrep3d} that $G(\phi , \theta) = 0$ for any $(\phi , \theta) \in S^{2}$ if $0$ is the centre of a ball. Moreover, if $G(\phi , \theta) > 0$ for all $0<\phi < \pi/2$ and all $\theta \in (0,2\pi)$, then $D_{\sigma_{0}} v(0) > 0$. Yet the two terms in $G$ are competing against each other, which makes verifying this more challenging than the two-dimensional setting. However, we immediately have the following corollary of Theorem~\ref{thm:vrep3d}.

\begin{corollary}
\label{cor:symmetry3d}
Suppose that $U\subset \mathbb{R}^{3}$ is a bounded convex domain, that $x_{0} = 0\in U$ and $\sigma _{0} = (\phi , \theta) = (0,0) \in S^{2}$. If $d_U(0,\phi,\theta) = d_U(0,\phi-\pi, \theta)$ for all $\phi \in (0,\pi/2)$, then $D_{\sigma_{0}} v(0) = 0$ and $0$ is a critical point of the variance function.
\end{corollary}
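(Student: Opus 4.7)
The plan is to apply Theorem~\ref{thm:vrep3d} directly and observe that the hypothesis forces the integrand to vanish pointwise. The representation established in Theorem~\ref{thm:vrep3d} expresses $D_{\sigma_0} v(0)$ as an integral over $(0,\pi/2) \times (0,2\pi)$ of $\sin(2\phi)\,G(\phi,\theta)$, where $G$ is the sum of two terms: a logarithmic term $2I_{1}^{U}(0)\ln(d_U(0,\phi-\pi,\theta)/d_U(0,\phi,\theta))$ and a difference term $d_U(0,\phi,\theta) - d_U(0,\phi-\pi,\theta)$. Under the antipodal symmetry hypothesis $d_U(0,\phi,\theta) = d_U(0,\phi-\pi,\theta)$ for all $\phi \in (0,\pi/2)$, the ratio appearing in the logarithm equals $1$, so the logarithmic term vanishes; likewise the difference term vanishes by hypothesis. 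Thus $G(\phi,\theta) \equiv 0$ on the entire region of integration.

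Consequently, the full integral representation from Theorem~\ref{thm:vrep3d} evaluates to zero, giving $D_{\sigma_0} v(0) = 0$. Since $v_U$ is non-negative and bounded below, this directional derivative being zero exhibits $0$ as a critical point of $v_U$ in the direction $\sigma_0$. This completes the argument, and there is no substantive obstacle: the corollary is essentially a direct consequence of the explicit formula in Theorem~\ref{thm:vrep3d} once the hypothesis is plugged in.

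One minor stylistic choice is whether to present the two terms of $G$ vanishing separately or to simply state that the symmetry hypothesis makes $G$ identically zero; I would opt for the brief approach, writing out the substitution in one short display and concluding. No auxiliary lemmas, no passage to smooth approximations, and no appeal to Theorem~\ref{thm:converge} are needed, since Theorem~\ref{thm:vrep3d} is assumed to hold for the domain $U$ in question. The result is the three-dimensional analogue of Corollary~\ref{cor:rotation}, but here it only yields a critical point rather than the unique minimum, reflecting the fact that convexity of $v_U$ in dimension~$3$ is not established.
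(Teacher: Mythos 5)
Your core computation is correct and follows exactly the route the paper intends: substitute the symmetry hypothesis into the formula of Theorem~\ref{thm:vrep3d}, observe that both terms of $G(\phi,\theta)$ vanish identically, and conclude $D_{\sigma_0} v(0) = 0$. That is precisely why the paper calls this an ``immediate'' corollary and does not spell out a proof.

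The one place your write-up goes astray is the justification of the second conclusion, that $0$ is a critical point of $v_U$. You write that ``since $v_U$ is non-negative and bounded below, this directional derivative being zero exhibits $0$ as a critical point of $v_U$ in the direction $\sigma_0$,'' but neither non-negativity nor boundedness below has any bearing on whether a point is critical, and ``critical point in the direction $\sigma_0$'' is not a meaningful strengthening of $D_{\sigma_0} v(0) = 0$, which you had already established. Being a critical point requires the directional derivative to vanish in \emph{every} direction, not just in $\sigma_0$. The honest route is to observe that the symmetry hypothesis is rotation-invariant when interpreted as central symmetry of $U$ about $0$, so that after an orthogonal change of coordinates one may take any direction to be $(\phi,\theta)=(0,0)$ and rerun the same calculation; this yields $D_\sigma v(0) = 0$ for all $\sigma \in S^2$ and hence that $0$ is a critical point. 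Either spell out that step, or simply drop the spurious appeal to non-negativity and state that the vanishing in all directions follows by symmetry and Theorem~\ref{thm:props}(a).
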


If we knew that $v_U$ is convex in three dimensions then Corollary~\ref{cor:symmetry3d} would show that the symmetry properties in the hypothesis would guarantee the location of the unique minimum of the variance function. Even without knowing the convexity of $v_U$, we may still analyze certain properties of the variance. 

\begin{proof}[Proof of Theorem~\ref{thm:vvsk3dstatement}]
By applying a preliminary ambient isometry of $\mathbb{R}^{3}$, we may assume that $x_{0} = 0$ and $\sigma = (\phi , \theta) = (0,0)$. Then by 
Theorem~\ref{thm:vrep3d}, we have
\[ D_{\sigma_{0}} v(x_{\delta}) = \frac{1}{4\pi} \int_{0}^{2\pi} \int_{0}^{\pi/2} \sin (2\phi ) G(\phi , \theta) \: d\phi \: d\theta,\]
recalling \eqref{eq:gpt}.
First, it is clear that
\[ \left | d_U(x_{\delta},\phi,\theta) - d_U(x_{\delta},\phi - \pi , \theta) \right | \leq \operatorname{diam} (U).\]
To estimate the other term in $G(\phi,\theta)$, we will follow the strategy of Theorem~\ref{thm:vvskstatement} (recall again Figure~\ref{fig:4}). More precisely, given $x_{0}$ as in the hypotheses and $\sigma$ a direction to a nearest point on $\partial U$, it follows that this point is the unique nearest point to $x_{\delta}$, for $0<\delta <1$, on $\partial U$. Using trigonometry, it follows that for $0\leq \theta \leq 2\pi$ and $0\leq \phi < \pi/2$ we have
\[ d_U(x_{\delta} , \phi , \theta) \leq \frac{\delta r}{\cos \phi } .\]
On the other hand, for $0\leq \theta \leq 2\pi$ and $\pi/2 < \phi < \pi$, we have
\[ d_U(x_{\delta}, \phi , \theta) \geq r \sqrt{2\delta - \delta^{2}}.\]
We therefore have
\[ \frac{ d_U(x_{\delta},\phi-\pi , \theta)}{d_U(x_{\delta},\phi , \theta) } \geq \frac{  r \sqrt{2\delta - \delta^{2}}\cos \phi }{\delta r} = \left( \frac{2}{\delta} -1 \right)^{1/2} \cos \phi .\]
Putting this altogether, we may estimate $G(\phi , \theta)$ by
\[ G(\phi , \theta) \geq 2I_{1}^{U}(x_{\delta}) \left( \frac{1}{2} \ln \left( \frac{2}{\delta} - 1 \right) + \ln \cos \phi \right) - \operatorname{diam} U.\]
Thus
\[ D_{\sigma_{0}}v(x_{\delta}) \geq \frac{1}{4\pi} \int_{0}^{2\pi} \int_{0}^{\pi/2} \sin (2\phi) \left [  2I_{1}^{U}(x_{\delta}) \left( \frac{1}{2} \ln \left( \frac{2}{\delta} - 1 \right) + \ln \cos \phi \right) - \operatorname{diam} U \right ] \: d\phi \: d\theta.\]
The only term we need to check here involves the following integrals, where we make the substitution $u=\cos \phi$:
\begin{align*}
\int_{0}^{\pi/2} \sin (2\phi) \ln \cos \phi \: d\phi &= \int_{0}^{\pi/2} 2\sin \phi \cos \phi \ln \cos \phi \: d\phi \\
&= \int_{1}^0 - 2u\ln u \: du \\
&=2 \int_{0}^{1} u \ln u \: du = -\frac{1}{2}.
\end{align*}
It follows that 
\[ D_{\sigma_{0}} v(x_{\delta}) \geq \frac{1}{4\pi} \int_{0}^{2\pi} \int_{0}^{\pi/2} I_{1}^{U}(x_{\delta}) \sin(2\phi) \ln \left( \frac{2}{\delta} -1 \right) \: d\phi \: d\theta + O(1) = O\left(\ln  \frac{1}{\delta} \right) \]
as $\delta \to 0$. This completes the proof.
\end{proof}

Recall from Example~\ref{ex:ball} that if $U$ is the unit ball in $\mathbb{R}^{3}$, then $D_{x/|x|} v_U(x) >0$ for all $x\in U \setminus  \{ 0 \}$. If we perturb $U$ to be close to a ball, then once we are outside a compact subset of $U$, we have a similar result for the variance.

\begin{proof}[Proof of Theorem~\ref{thm:elliptic3dstatement}]
Given $r\in(0,1)$, we apply an ambient isometry $h:\mathbb{R}^{3} \to \mathbb{R}^{3}$ so that $U_r := h(U)$ has $\partial U_r$ contained in 
\[ \{ x\in \mathbb{R}^{3} : 1\leq |x - (r,0,0) | \leq 1+\epsilon \}, \]
that $(0,0,0) \in U_r$ is our point of interest, and that $\sigma_{0} = (\phi , \theta) = (0,0) \in S^{2}$.
Suppose that $\epsilon >0$.

As in the proof of Theorem~\ref{thm:ellipticstatement}, we will estimate $d_{U_r}$ by replacing $U_r$ with $V_r$ given by the analogue of Figure~\ref{fig:5}. For $x_{1}\geq 0$ we have $(x_{1},x_{2},x_{3}) \in \partial V_r$ if and only if $\sqrt{x_{1}^{2} + x_{2}^{2} +x_{3}^{2}} = 1 + \epsilon$, and for $x_{1} <0$, we have $(x_{1},x_{2},x_{3}) \in \partial V_r$ if and only if $ \sqrt{x_{1}^{2}+x_{2}^{2}+x_{3}^{2}} = 1$.

Recalling $G(\phi,\theta)$ from \eqref{eq:gpt}, the final term can be bounded via
\[ | d_{U_r}(0,\phi,\theta) - d_{U_r}(0,\phi - \pi , \theta) | \leq \operatorname{diam} U_r = \operatorname{diam} U.\]
Then by Theorem~\ref{thm:vrep3d}, we have
\begin{align*}
D_{\sigma_{0}} &v_{U_r}(0) = \frac{1}{4\pi} \int_{0}^{2\pi} \int_{0}^{\pi/2} \sin (2\phi ) G(\phi , \theta) \: d\phi \: d\theta \\
& \geq \frac{I_{1}^{U_r}(0)}{2\pi} \int_{0}^{2\pi} \int_{0}^{\pi/2} \sin(2\phi) \ln \left( \frac{d_{U_r}(0,\phi - \pi , \theta) }{d_{U_r}(0,\phi,\theta)} \right) \: d\phi \: d\theta - \operatorname{diam} U\\
& \geq \frac{I_{1}^{U_r}(0)}{2\pi} \int_{0}^{2\pi} \int_{0}^{\pi/2} \sin(2\phi) \ln \left( \frac{d_{V_r}(0,\phi - \pi , \theta) }{d_{V_r}(0,\phi,\theta)} \right) \: d\phi \: d\theta - \operatorname{diam} U\\
&= \frac{I_{1}^{U_r}(0)}{2\pi}  \int_{0}^{2\pi} \int_{0}^{\pi/2} \sin(2\phi) 
\ln \left( \frac{ -r\cos ( \phi - \pi) + \sqrt{ 1-r^{2} \sin^{2}(\phi - \pi) } } {-r\cos \phi + \sqrt{1-r^{2}\sin^{2}\phi} } \right) \: d\phi \: d\theta - \operatorname{diam} U \\
&= I_{1}^{U_r}(0)  \int_{0}^{\pi/2} \sin(2\phi) 
\ln \left( \frac{ r\cos ( \phi ) + \sqrt{ 1-r^{2} \sin^{2}(\phi ) } } {-r\cos \phi + \sqrt{1-r^{2}\sin^{2}\phi} } \right) \: d\phi  - \operatorname{diam} U
\end{align*}
To make further progress, we first deal with the following integral by substituting $u = r\sin \phi$ and then integrating by parts with $s = \ln ( \sqrt{r^{2}-u^{2}} + \sqrt{1-u^{2}})$ and $dt = 2ur^{-2} du$:
\begin{align*}
\int_{0}^{\pi/2} \sin(2\phi) &\ln \left( r\cos \phi + \sqrt{1-r^{2}\sin^{2}\phi } \right) \: d\phi \\
&= \int_{0}^{\pi/2} 2\sin \phi \cos \phi \ln \left( r\cos \phi + \sqrt{1-r^{2}\sin^{2}\phi } \right) \: d\phi \\
&= \int_{0}^r \frac{2u}{r^{2}} \ln \left( \sqrt{r^{2}-u^{2}} + \sqrt{1-u^{2}} \right) \: du\\
&= \left [ \frac{u^{2}}{r^{2}} \ln \left( \sqrt{r^{2}-u^{2}} + \sqrt{1-u^{2}} \right) \right ]_{u=0}^r + \int_{0}^r \frac{u^{3}}{r^{2}\sqrt{r^{2}-u^{2}}\sqrt{1-u^{2}} } \: du\\
&= \ln \sqrt{1-r^{2}} + \frac{1}{2r^{2}} \left( \frac{r^{2}+1}{2} \tanh^{-1}(r) - r \right).
\end{align*}
By an analogous computation that we omit, we have
\begin{align*} \int_{0}^{\pi/2} \sin(2\phi) \ln & \left( -r\cos \phi  + \sqrt{(1+\epsilon)^{2} - r^{2}\sin^{2}\phi} \right) \: d\phi \\
&= \ln \sqrt{ (1+\epsilon)^{2} - r^{2}} - \frac{1}{2r^{2}} \left( [( 1+\epsilon)^{2} + r^{2} ] \tanh^{-1} \left( \frac{r}{1+\epsilon} \right) - (1+\epsilon)r \right).
\end{align*}
Putting this altogether, we obtain
\begin{align*}
D_{\sigma_{0}} v_{U_r}(0) &\geq I_{1}^{U_r}(0) \left [ \frac{1}{2} \ln \left( \frac{ 1-r^{2}}{(1+\epsilon)^{2} - r^{2} }\right) \right . \\
& \hskip0.2in + \frac{1}{2r^{2}} \left( \left . (r^{2}+1) \tanh^{-1} (r) + ((1+\epsilon)^{2}+r^{2}) \tanh^{-1} (r/(1+\epsilon)) - \frac{ 2+\epsilon}{2r} \right) \right ] \\
& \hskip0.2in - \operatorname{diam} (U).
\end{align*}
We now fix $\epsilon >0$ and let $r\to 1^-$. 
We have
\begin{align*} 
\frac{1}{2} \ln (1-r^{2}) + \frac{r^{2}+1}{2r^{2}} \tanh^{-1}(r) &= \frac{1}{2} \ln(1-r^{2})+ \left( \frac{r^{2}+1}{4r^{2}} \right) \ln \frac{1+r}{1-r} \\
&= \left( \frac{1}{2} + \frac{r^{2}+1}{4r^{2}} \right) \ln (1+r) + \left( \frac{1}{2} - \frac{r^{2}+1}{4r^{2}} \right) \ln (1-r) \\
&= \left( \frac{1}{2} + \frac{r^{2}+1}{4r^{2}} \right) \ln (1+r) + \frac{r^{2}-1}{4r^{2}} \ln (1-r)\\
& \to \ln 2
\end{align*}
as $r\to 1^-$. Set $C = \sup_{r\in(0,1)} I_{1}^{U_r}(0) <\infty$. 
Therefore, we have
\begin{align*}
\lim_{r\to 1^-} D_{\sigma_{0}} v_{U_r} (0) &\geq C \left( \ln 2 -1  - \epsilon /2- \frac{1}{2} \ln (2\epsilon +\epsilon^{2}) + (1 + \epsilon + \epsilon^{2}/2) \tanh^{-1}(1/(1+\epsilon) ) \right) \\
& \hskip0.5in - \operatorname{diam} (U) \\
& = O \left( \frac{1}{\epsilon} \right).
\end{align*}
In particular, we conclude that there exists $\epsilon_{0}>0$ so that if $\epsilon \in (0,\epsilon_{0})$ then there exists $r(\epsilon) \in (0,1)$ so that if $r\in (r(\epsilon),1)$, then 
\[  D_{\sigma_{0}} v_{U_r} (0) > 0.\]
Undoing the ambient isometry from the start of the proof, and applying Theorem~\ref{thm:props}, completes the proof.
\end{proof}

Our estimates here are not optimal. In particular, we have not obtained that $r(\epsilon) \to 0$ as $\epsilon \to 0$. However, the value of Theorem~\ref{thm:elliptic3dstatement} is that it gives a uniform estimate of the type from Theorem~\ref{thm:vvsk3dstatement} that is valid in all directions.

\section*{Acknowledgments}

The authors would like to thank Stanley Strawbridge and Yunier Bello Cruz for interesting conversations on the topics of this paper.

\bibliographystyle{siamplain}
\bibliography{references}

\end{document}